\newtheorem{thm}{Theorem} [section]
\newtheorem{lemma}[thm]{Lemma}
\newtheorem{corollary}[thm]{Corollary}
\newtheorem{prop}[thm]{Proposition}
\theoremstyle{definition}
\newtheorem{defn}[thm]{Definition}
\theoremstyle{remark}
\newtheorem{remark}[thm]{Remark}
\begin{document}

\numberwithin{equation}{section}

\newcommand{\hs}{\mbox{\hspace{.4em}}}
\newcommand{\ds}{\displaystyle}
\newcommand{\bd}{\begin{displaymath}}
\newcommand{\ed}{\end{displaymath}}
\newcommand{\bcd}{\begin{CD}}
\newcommand{\ecd}{\end{CD}}

\newcommand{\on}{\operatorname}
\newcommand{\proj}{\operatorname{Proj}}
\newcommand{\bproj}{\underline{\operatorname{Proj}}}

\newcommand{\spec}{\operatorname{Spec}}
\newcommand{\Spec}{\operatorname{Spec}}
\newcommand{\bspec}{\underline{\operatorname{Spec}}}
\newcommand{\pline}{{\mathbf P} ^1}
\newcommand{\aline}{{\mathbf A} ^1}
\newcommand{\pplane}{{\mathbf P}^2}
\newcommand{\aplane}{{\mathbf A}^2}
\newcommand{\coker}{{\operatorname{coker}}}
\newcommand{\ldb}{[[}
\newcommand{\rdb}{]]}

\newcommand{\Sym}{\operatorname{Sym}^{\bullet}}
\newcommand{\Symp}{\operatorname{Sym}}
\newcommand{\Pic}{\bf{Pic}}
\newcommand{\Aut}{\operatorname{Aut}}
\newcommand{\PAut}{\operatorname{PAut}}

\newcommand{\too}{\twoheadrightarrow}
\newcommand{\C}{{\mathbf C}}
\newcommand{\Z}{{\mathbf Z}}
\newcommand{\Q}{{\mathbf Q}}
\newcommand{\R}{{\mathbf R}}
\newcommand{\Cx}{{\mathbf C}^{\times}}
\newcommand{\Cbar}{\overline{\C}}
\newcommand{\Cxbar}{\overline{\Cx}}
\newcommand{\cA}{{\mathcal A}}
\newcommand{\cS}{{\mathcal S}}
\newcommand{\cV}{{\mathcal V}}
\newcommand{\cM}{{\mathcal M}}
\newcommand{\bA}{{\mathbf A}}
\newcommand{\cB}{{\mathcal B}}
\newcommand{\cC}{{\mathcal C}}
\newcommand{\cD}{{\mathcal D}}
\newcommand{\D}{{\mathcal D}}
\newcommand{\cs}{{\mathbf C} ^*}
\newcommand{\boldc}{{\mathbf C}}
\newcommand{\cE}{{\mathcal E}}
\newcommand{\cF}{{\mathcal F}}
\newcommand{\bF}{{\mathbf F}}
\newcommand{\cG}{{\mathcal G}}
\newcommand{\G}{{\mathbb G}}
\newcommand{\cH}{{\mathcal H}}
\newcommand{\CI}{{\mathcal I}}
\newcommand{\cJ}{{\mathcal J}}
\newcommand{\cK}{{\mathcal K}}
\newcommand{\cL}{{\mathcal L}}
\newcommand{\baL}{{\overline{\mathcal L}}}
\newcommand{\M}{{\mathcal M}}
\newcommand{\Mf}{{\mathfrak M}}
\newcommand{\bM}{{\mathbf M}}
\newcommand{\bm}{{\mathbf m}}
\newcommand{\cN}{{\mathcal N}}
\newcommand{\theo}{\mathcal{O}}
\newcommand{\cP}{{\mathcal P}}
\newcommand{\cR}{{\mathcal R}}
\newcommand{\Pp}{{\mathbb P}}
\newcommand{\boldp}{{\mathbf P}}
\newcommand{\boldq}{{\mathbf Q}}
\newcommand{\bbL}{{\mathbf L}}
\newcommand{\cQ}{{\mathcal Q}}
\newcommand{\cO}{{\mathcal O}}
\newcommand{\Oo}{{\mathcal O}}
\newcommand{\cY}{{\mathcal Y}}
\newcommand{\OX}{{\Oo_X}}
\newcommand{\OY}{{\Oo_Y}}
\newcommand{\otY}{{\underset{\OY}{\ot}}}
\newcommand{\otX}{{\underset{\OX}{\ot}}}
\newcommand{\cU}{{\mathcal U}}\newcommand{\cX}{{\mathcal X}}
\newcommand{\cW}{{\mathcal W}}
\newcommand{\boldz}{{\mathbf Z}}
\newcommand{\qgr}{\operatorname{q-gr}}
\newcommand{\gr}{\operatorname{gr}}
\newcommand{\rk}{\operatorname{rk}}
\newcommand{\Sh}{\operatorname{Sh}}
\newcommand{\SH}{{\underline{\operatorname{Sh}}}}
\newcommand{\End}{\operatorname{End}}
\newcommand{\uEnd}{\underline{\operatorname{End}}}
\newcommand{\Hom}{\operatorname{Hom}}
\newcommand{\uHom}{\underline{\operatorname{Hom}}}
\newcommand{\uHomY}{\uHom_{\OY}}
\newcommand{\uHomX}{\uHom_{\OX}}
\newcommand{\Ext}{\operatorname{Ext}}
\newcommand{\bExt}{\operatorname{\bf{Ext}}}
\newcommand{\Tor}{\operatorname{Tor}}

\newcommand{\inv}{^{-1}}
\newcommand{\airtilde}{\widetilde{\hspace{.5em}}}
\newcommand{\airhat}{\widehat{\hspace{.5em}}}
\newcommand{\nt}{^{\circ}}
\newcommand{\del}{\partial}

\newcommand{\supp}{\operatorname{supp}}
\newcommand{\GK}{\operatorname{GK-dim}}
\newcommand{\hd}{\operatorname{hd}}
\newcommand{\id}{\operatorname{id}}
\newcommand{\res}{\operatorname{res}}
\newcommand{\lrar}{\leadsto}
\newcommand{\im}{\operatorname{Im}}
\newcommand{\HH}{\operatorname{H}}
\newcommand{\TF}{\operatorname{TF}}
\newcommand{\Bun}{\operatorname{Bun}}

\newcommand{\F}{\mathcal{F}}
\newcommand{\Ff}{\mathbb{F}}
\newcommand{\nthord}{^{(n)}}
\newcommand{\Gr}{{\mathfrak{Gr}}}

\newcommand{\Fr}{\operatorname{Fr}}
\newcommand{\GL}{\operatorname{GL}}
\newcommand{\gl}{\mathfrak{gl}}
\newcommand{\SL}{\operatorname{SL}}
\newcommand{\ff}{\footnote}
\newcommand{\ot}{\otimes}
\def\Ext{\operatorname {Ext}}
\def\Hom{\operatorname {Hom}}
\def\Ind{\operatorname {Ind}}
\def\bbZ{{\mathbb Z}}

\newcommand{\nc}{\newcommand}
\nc{\ol}{\overline} \nc{\cont}{\on{cont}} \nc{\rmod}{\on{mod}}
\nc{\Mtil}{\widetilde{M}} \nc{\wb}{\overline} \nc{\wt}{\widetilde}
\nc{\wh}{\widehat} \nc{\sm}{\setminus} \nc{\mc}{\mathcal}
\nc{\mbb}{\mathbb}  \nc{\K}{{\mc K}} \nc{\Kx}{{\mc K}^{\times}}
\nc{\Ox}{{\mc O}^{\times}} \nc{\unit}{{\bf \on{unit}}}
\nc{\boxt}{\boxtimes} \nc{\xarr}{\stackrel{\rightarrow}{x}}

\nc{\Ga}{\G_a}
 \nc{\PGL}{{\on{PGL}}}
 \nc{\PU}{{\on{PU}}}

\nc{\h}{{\mathfrak h}} \nc{\kk}{{\mathfrak k}}
 \nc{\Gm}{{\G_m}}
\nc{\Gabar}{\wb{\G}_a} \nc{\Gmbar}{\wb{\G}_m} \nc{\Gv}{G^\vee}
\nc{\Tv}{T^\vee} \nc{\Bv}{B^\vee} \nc{\g}{{\mathfrak g}}
\nc{\gv}{{\mathfrak g}^\vee} \nc{\RGv}{\on{Rep}\Gv}
\nc{\RTv}{\on{Rep}T^\vee}
 \nc{\Flv}{{\mathcal B}^\vee}
 \nc{\TFlv}{T^*\Flv}
 \nc{\Fl}{{\mathfrak Fl}}
\nc{\RR}{{\mathcal R}} \nc{\Nv}{{\mathcal{N}}^\vee}
\nc{\St}{{\mathcal St}} \nc{\ST}{{\underline{\mathcal St}}}
\nc{\Hec}{{\bf{\mathcal H}}} \nc{\Hecblock}{{\bf{\mathcal
H_{\alpha,\beta}}}} \nc{\dualHec}{{\bf{\mathcal H^\vee}}}
\nc{\dualHecblock}{{\bf{\mathcal H^\vee_{\alpha,\beta}}}}
\newcommand{\ramBun}{{\bf{Bun}}}
\newcommand{\ramBuno}{\ramBun^{\circ}}

\nc{\Buntheta}{{\bf Bun}_{\theta}} \nc{\Bunthetao}{{\bf
Bun}_{\theta}^{\circ}} \nc{\BunGR}{{\bf Bun}_{G_\R}}
\nc{\BunGRo}{{\bf Bun}_{G_\R}^{\circ}}
\nc{\HC}{{\mathcal{HC}}}
\nc{\risom}{\stackrel{\sim}{\to}} \nc{\Hv}{{H^\vee}}
\nc{\bS}{{\mathbf S}}
\def\Rep{\operatorname {Rep}}
\def\Conn{\operatorname {Conn}}

\nc{\Vect}{{\operatorname{Vect}}}
\nc{\Hecke}{{\operatorname{Hecke}}}

\newcommand{\ZZ}{{Z_{\bullet}}}
\nc{\HZ}{{\mc H}\ZZ} \nc{\eps}{\epsilon}

\nc{\CN}{\mathcal N} \nc{\BA}{\mathbb A}

\nc{\ul}{\underline}

\nc{\bn}{\mathbf n} \nc{\Sets}{{\on{Sets}}} \nc{\Top}{{\on{Top}}}
\nc{\IntHom}{{\mathcal Hom}}

\nc{\Simp}{{\mathbf \Delta}} \nc{\Simpop}{{\mathbf\Delta^\circ}}

\nc{\Cyc}{{\mathbf \Lambda}} \nc{\Cycop}{{\mathbf\Lambda^\circ}}

\nc{\Mon}{{\mathbf \Lambda^{mon}}}
\nc{\Monop}{{(\mathbf\Lambda^{mon})\circ}}

\nc{\Aff}{{\on{Aff}}} \nc{\Sch}{{\on{Sch}}}

\nc{\bul}{\bullet}
\nc{\module}{{\operatorname{mod}}}

\nc{\dstack}{{\mathcal D}}

\nc{\BL}{{\mathbb L}}

\nc{\BD}{{\mathbb D}}

\nc{\BR}{{\mathbb R}}

\nc{\BT}{{\mathbb T}}

\nc{\SCA}{{\mc{SCA}}}
\nc{\DGA}{{\mc DGA}}

\nc{\DSt}{{DSt}}

\nc{\lotimes}{{\otimes}^{\mathbf L}}

\nc{\bs}{\backslash}

\nc{\Lhat}{\widehat{\mc L}}

\newcommand{\Coh}{{\on{Coh}}}

\nc{\QCoh}{QC}
\nc{\QC}{QC}
\nc{\Perf}{\rm{Perf}}
\nc{\Cat}{{\on{Cat}}}
\nc{\dgCat}{{\on{dgCat}}}
\nc{\bLa}{{\mathbf \Lambda}}

\nc{\RHom}{\mathbf{R}\hspace{-0.15em}\on{Hom}}
\nc{\REnd}{\mathbf{R}\hspace{-0.15em}\on{End}}
\nc{\colim}{\on{colim}}
\nc{\oo}{\infty}
\nc{\Mod}{{\on{Mod}}}

\nc\fh{\mathfrak h}
\nc\al{\alpha}
\nc\la{\alpha}
\nc\BGB{B\bs G/B}
\nc\QCb{QC^\flat}
\nc\qc{QC}

\nc{\fg}{\mathfrak g}

\nc{\Map}{\on{Map}} \nc{\fX}{\mathfrak X}

\nc{\ch}{\check}
\nc{\fb}{\mathfrak b} \nc{\fu}{\mathfrak u} \nc{\st}{{st}}
\nc{\fU}{\mathfrak U}
\nc{\fZ}{\mathfrak Z}

\nc\fk{\mathfrak k} \nc\fp{\mathfrak p}

\nc{\RP}{\mathbf{RP}} \nc{\rigid}{\text{rigid}}
\nc{\glob}{\text{glob}}

\nc{\cI}{\mathcal I}

\nc{\La}{\mathcal La}

\nc{\quot}{/\hspace{-.25em}/}

\nc\aff{\it{aff}}
\nc\BS{\mathbb S}

\nc{\qcoh}{\on{QCoh}}
\nc{\topmodule}{\on{-cmod}}
\nc{\fingen}{\on{fin}}
\nc{\fgmodule}{\on{fgmod}}

\nc\coh{\text{\it \hspace{-0.5em} coh}}

\title{Loop Spaces and Representations}

\author{David Ben-Zvi}
\address{Department of Mathematics\\University of Texas\\Austin, TX 78712-0257}
\email{benzvi@math.utexas.edu}
\author{David Nadler}
\address{Department of Mathematics\\Northwestern University\\Evanston, IL 60208-2370}
\email{nadler@math.northwestern.edu}

\begin{abstract}
  We introduce loop spaces (in the sense of derived algebraic
  geometry) into the representation theory of reductive groups.  In
  particular, we apply the theory developed in~\cite{conns} to flag
  varieties, and obtain new insights into fundamental categories in
  representation theory. First, we show that one can recover
  finite Hecke categories (realized by $\D$-modules on flag varieties)
  from affine Hecke categories (realized by coherent sheaves on
  Steinberg varieties) via $S^1$-equivariant localization. Similarly,
  one can recover $\D$-modules on the nilpotent cone from coherent sheaves
  on the commuting variety. We also show that the categorical
  Langlands parameters for real groups studied by
  Adams-Barbasch-Vogan~\cite{ABV} and Soergel~\cite{Soergel} arise
  naturally from the study of loop spaces of flag varieties and their
  Jordan decomposition (or in an alternative formulation, from the
  study of local systems on a M\"obius strip). This provides a
  unifying framework that overcomes a discomforting aspect of the
  traditional approach to the Langlands parameters, namely their
  evidently strange behavior with respect to changes in infinitesimal
  character.
\end{abstract}

\maketitle

\tableofcontents


\section{Introduction}

This is the second paper in a two paper series\footnote{It is a
  strengthened version of the second half of the
  preprint~\cite{BN07}.}. In the first paper~\cite{conns}, we studied
loop spaces in derived algebraic geometry, in particular the relation
between quasicoherent sheaves on loop spaces and connections on the
original space.  In this paper, we apply this theory to flag varieties
of reductive groups and find consequences for the representation
theory of Hecke algebras and Lie groups.

It is well known that connections -- in the form of $\D$-modules --
play a central role in geometric representation theory. The theory of
the first part~\cite{conns} establishes the intimate relation between
$\D$-modules and the geometry of so called ``small loops,'' or more
precisely, loops in the formal neighborhood of constant loops.  Thus
with Beilinson-Bernstein localization in mind, it is not surprising
that small loops provide an alternative language to discuss
constructions of representations. The striking feature of this second
part is that the geometry of the space of all loops arises naturally
in the dual {\em Langlands parametrization} of
representations. Essential to this realization is the notion,
motivated by rational homotopy theory, of {\em unipotent loops}
introduced in \cite{conns}. In fact, we describe a pattern of Jordan
decomposition for loop spaces which precisely accounts for the
seemingly erratic behavior of the Langlands parameters as functions of
infinitesimal character. Thus we will see that the notion of loop
space organizes much of the seeming cacophany of spaces parametrizing
representations of Lie groups.

We summarize our main results immediately below.  We have organized
the dicsussion into three parts: the first related to Hecke categories,
the second to representations of real groups, and the third to the
nilpotent cone and commuting variety.

In Section~\ref{section: background}, we briefly review the results of
\cite{conns} on loop spaces in derived algebraic geometry. This will
provide the bridge from quasicoherent sheaves to $\D$-modules.

In Section~\ref{Steinberg section}, we present our results on
Steinberg varieties and Hecke categories.

In Section~\ref{Langlands section}, we present our results on
Langlands parameters for real groups.

\medskip
\noindent
{\em Categorical conventions.}  We work throughout over the complex
numbers $\C$.  Our main results are equivalences of small
pre-triangulated $\C$-linear differential graded (dg)
categories. (Typically these arise as dg enhancements of derived
categories of modules for differential graded algebras.)
We will consistently view such dg categories as objects of the
$\oo$-category of small idempotent-complete stable $\C$-linear
$\oo$-categories (with morphisms exact functors), or (passing to
ind-categories) of stable presentable $\C$-linear $\oo$-categories
(with morphisms continuous functors).  Where not made explicit, the term {\em dg category} will refer to such an enhanced derived category, so localized along quasi-isomorphisms, and all functors will be assumed to be derived. 
For example, by the dg category  of quasicoherent sheaves on a stack, we will mean the standard dg enhancement of the derived category. Below in Section~\ref{section: background}, one will find a further summary of 
adopted conventions from derived
algebraic geometry.

\medskip
\noindent
{\em Lie notation.}
For easy reference, we collect here a brief summary of our Lie notation.
Let $ G$ be a connected reductive
complex algebraic group with Lie algebra $ \fg$. 
Let $ \cB$ be
the flag variety of $ G$ parameterizing Borel subgroups $ B\subset
G$. For each $ B\in  \cB$, we have an identification $\cB \simeq G/B$, and the Cartan quotient $ H= B/ U$
where $ U\subset  B$ is the unipotent radical. The natural conjugation $
G$-action on $\cB$ canonically identifies the Cartan
quotients for different $ B$, and so it is justified to call $ H$ the universal
Cartan.
It is also convenient to choose a maximal torus $T\subset B$ with normalizer $N(T)\subset G$,
and Weyl group $W= N(T)/T$.


\medskip
\noindent
{\em Coherent sheaves.}  We will frequently encounter closed
embeddings of stacks $e:X \hookrightarrow Z$ in which $Z$ is equipped
with a contracting $\Gm$-action with fixed locus $X$. By contracting, we mean the action map
$\Gm\times X \to X$ extends to a map $\BA^1 \times X\to X$ such that $\{0\} \times Z \to Z$ is a retraction onto $X$.
 Furthermore, the stacks will be
quotients by linear actions of an affine group scheme: $X$ will be the
quotient of a smooth quasi-projective variety; $Z$ will be the
quotient of the formal scheme given by the formal neighborhood of an
invariant subvariety of a smooth quasi-projective variety.  We will
write $\Coh_{[X]}(Z)^\Gm$ for the dg derived category of
$\Gm$-equivariant coherent sheaves $\cF$ on $Z$ such that the
restriction $e^*\cF$ is also coherent (or equivalently, perfect since
$X$ is smooth).  We refer to $\Coh_{[X]}(Z)^\Gm$ as the dg category of
$\Gm$-equivariant coherent sheaves on $Z$ coherent along $X$.  To see
that $\Coh_{[X]}(Z)^\Gm$ is a rich category, one could note that any
$\Gm$-equivariant perfect sheaf on $Z$ is an object of
$\Coh_{[X]}(Z)^\Gm$.

%


\medskip
\noindent{\em Acknowledgments.} We would like to
thank Roman Bezrukavnikov for his continued interest, many helpful
discussions and for sharing with us unpublished work.

The first author was partially supported by NSF grant DMS-0449830,
and the second author by NSF grant DMS-0600909. Both authors would
also like to acknowledge the support of DARPA grant
HR0011-04-1-0031.

\subsection{Hecke categories}

Our first main result provides a direct relation between two of the
fundamental categories in representation theory.
  
\begin{defn}
  
  The {\em finite Hecke category} is the
  dg category $$\cH_G=\cD_{\coh}(\BGB)
  $$ of $B$-equivariant coherent $\D$-modules on the
  flag variety $\cB \simeq G/B$.
  \end{defn}
  
   Let $G^u\subset G$ denote the formal scheme given by the formal
   neighborhood of the unipotent elements of $G$, and recall the
   unipotent Steinberg variety (or more precisely, formal scheme, though the 
   technical distinction will play no role)
$$\St^u=\{(g, B_1, B_2) \in G^u\times \cB\times\cB\, |\, g\in B_1\cap B_2\}$$
Requiring the element $g\in G^u$ to be the identity provides a canonical closed embedding 
$$ \xymatrix{ \cB \times \cB \ar@{^(->}[r] & \St^u }$$ 
Equivalently, instead of $G^u \subset G$, we could take the Lie algebra version $\fg^u \subset \fg$. In this way,
we see that there
is a natural contracting $\Gm$-action on $\St^u$ induced by the dilation of $\fg$. 
  \begin{defn}
  
  We take the {\em affine Hecke category} to be
 the dg
  derived category  
  $$
  \cH_G^{\aff}=\Coh_{[(\cB\times  \cB)/G]}(\St^u/G)^\Gm
  $$ 
  of $\Gm$-equivariant coherent sheaves 
  on  $ \St^u/G$ coherent along $(\cB\times \cB)/G$.

\end{defn}

\begin{remark}
One might prefer to take the 
affine Hecke category to be
 the dg
  derived category  
  of all $\Gm$-equivariant coherent sheaves on $\St^u/G$.
  We have only restricted to the full subcategory of the above definition in order to state our main results most cleanly.
\end{remark}

The starting point of this work is the following observation: one can
realize $\St^u/G$ as a kind of loop space in the setting of derived
algebraic geometry. Hence it carries a corresponding action of the
circle $S^1$, which can be expressed  in terms of the
Hopf algebra of cochains 
$$
\xymatrix{
\cO(S^1) = C^*(S^1,k)\simeq H^*(S^1,k)=k[\eta]/(\eta^2),\quad |\eta|=1.
}
$$
More precisely, $\St^u/G$ is a {\em unipotent loop space}, a notion
introduced in \cite{conns} (with motivation from rational homotopy
theory) and reviewed in Section \ref{section: background}.  As a
result, the circle action factors through an action of the affinization
$$\Aff(S^1) = \Spec \cO(S^1).
$$
Moreover, the dilation $\Gm$-action on $\St^u/G$ is naturally induced
by the formality of $\cO(S^1)$.  It follows (as for any unipotent loop
space) that the $S^1$-action is compatible with the $\Gm$-action, and
they combine into the action of the single semi-direct product group
$$
\BS = \Aff(S^1) \rtimes \Gm.
$$

Recall that if a category $\cD$ has an $S^1$-action, the category
$\cC=\cD^{S^1}$  of $S^1$-equivariant objects is
naturally linear over the global functions
$$
\cO(BS^1) \simeq H^*(BS^1, k) \simeq k[u],\quad |u|=2
$$ 
We write
$\cC_{loc}$ for the localization of $\cC$ where we invert the action
of the generator $u$.  Since the cohomological degree of $u$ is two,
the morphisms of $\cC_{loc}$ are naturally $\Z/2\Z$-graded rather than
$\Z$-graded.  If $\cD$ has an $\BS$-action, then
 the category $\cC = \cD^\BS$ of $\BS$-equivariant objects  is also
 naturally linear over $\cO(BS^1)$. But here
  thanks to the additional $\Gm$-weight grading, 
 the morphisms of $\cC_{loc}$  are
 again naturally $\Z$-graded.

Now we arrive at our first main result. 

\begin{thm} \label{first main result}There is a canonical equivalence $$\cD_{\coh}(\BGB)\simeq
 \Coh_{[(\cB\times  \cB)/G]}(\St^u/G)^\BS_{loc}
 $$ between the finite Hecke
  category and the localized $\BS$-invariants of the
  affine Hecke category. 
\end{thm}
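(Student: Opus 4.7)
The plan is to realize the affine Hecke category as the category of quasicoherent sheaves on the unipotent loop space of $\BGB$, and then invoke the main theorem of~\cite{conns} that interprets reasonable $\D$-modules as localized $\BS$-equivariant sheaves on the unipotent loop space.

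\textbf{Step 1: Realize $\St^u/G$ as a loop space.} The classical inertia stack of the quotient $\BGB$ parametrizes pairs of Borels $(B_1, B_2)$ together with an element $g \in B_1 \cap B_2$, modulo $G$-conjugation; this is precisely the full Steinberg stack $\St/G$. Passing to the unipotent piece amounts to restricting $g$ to the formal neighborhood of the unipotent variety and endowing the outcome with the derived structure coming from the self-intersection $\BGB \times_{\BGB \times \BGB} \BGB$. This gives the unipotent loop space $\cL^u(\BGB)$ in the sense of~\cite{conns}. The dilation $\Gm$-action on unipotent elements of $\St^u$ matches the canonical $\Gm$-action on a unipotent loop space that comes from the formality of $\cO(S^1) \simeq k[\eta]/(\eta^2)$.

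\textbf{Step 2: Transport the $\BS$-structure.} Any unipotent loop space carries a canonical action of $\BS = \Aff(S^1) \rtimes \Gm$, built from the $\Aff(S^1)$-action by rotation of loops together with the formality $\Gm$-action above. Under the identification of Step~1, this agrees with the $\BS$-action on $\cH_G^{\aff}$ appearing in the remark following the definition of the affine Hecke category. In particular, $(\cH_G^{\aff})^{\BS_-}_{loc}$ is identified with the localized bounded-above $\BS$-invariants of $\qcoh(\cL^u(\BGB))$.

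\textbf{Step 3: Apply the bridge theorem.} The main result of~\cite{conns}, specialized to the reasonable smooth stack $Y = \BGB$, supplies a canonical equivalence
$$
\cD(Y) \risom \qcoh(\cL^u Y)^{\BS_-}_{loc}
$$
between reasonable $\D$-modules on $Y$ and the localized bounded-above $\BS$-invariants of quasicoherent sheaves on $\cL^u Y$. Substituting the identifications of Steps~1 and~2 produces the desired equivalence $\cH_G \simeq (\cH_G^{\aff})^{\BS_-}_{loc}$.

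The main obstacle is Step~1: one must check that the derived algebro-geometric unipotent loop space of $\BGB$, formed by self-intersection of the diagonal, really coincides with $\St^u/G$ equipped with its derived structure, and that the $\Gm$-actions line up. The essential point is an \'etale-local analysis near a point $(B_1,B_2)$, where $\BGB$ looks like $\mathrm{pt}/(B_1 \cap B_2)$ and the unipotent loop space recovers the unipotent elements of $B_1 \cap B_2$ with the adjoint action, so that derived and classical self-intersection agree thanks to smoothness of the diagonal. Granted this identification, Steps~2 and~3 are formal applications of the machinery developed in~\cite{conns}.
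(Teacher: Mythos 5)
Your overall architecture matches the paper's: identify $\St^u/G$ with $\cL^u(\BGB)$ and then apply the localization theorem of~\cite{conns} relating reasonable $\D$-modules on a smooth geometric stack to localized bounded-above $\BS$-invariants of quasicoherent sheaves on its unipotent loop space. Steps~2 and~3 are exactly what the paper does in Section~\ref{application to Hecke}, and you correctly locate the nontrivial content in Step~1.

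However, the justification you offer for Step~1 is flawed. To identify $\cL(\BGB)$ with the classical stack $\St/G$ (and hence $\cL^u(\BGB)$ with $\St^u/G$), the crucial point is that the derived self-intersection $\BGB \times_{\BGB\times\BGB} \BGB$ has \emph{trivial} derived structure. Your proposed argument is that, \'etale-locally near $(B_1,B_2)$, the stack $\BGB$ ``looks like $\mathrm{pt}/(B_1\cap B_2)$'' and the diagonal is smooth. Neither claim holds: the $G$-orbit of $(B_1,B_2)$ in $\cB\times\cB$ is locally closed but not open unless $(B_1,B_2)$ is in relative position $w_0$, so a neighborhood of that point in $\BGB$ is not a classifying stack but involves a transverse slice; and the diagonal of $\BGB$ is not smooth or even flat, since the $G$-stabilizers on $\cB\times\cB$ jump in dimension across the Bruhat stratification (from $T$ on the big cell to $B$ on the small cell). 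If the diagonal were flat, classicality of the loop space would indeed be automatic, but that is precisely what fails here and makes the statement nontrivial. The paper establishes derived triviality via the dimension count of Proposition~\ref{dg structure of loops}: presenting $\BGB = Y/G$ as a groupoid scheme, the expected dimension of the graph intersection computing $\cL X$ is attained if and only if $Y$ has finitely many $G$-orbits, which for $Y = \cB\times\cB$ is the Bruhat decomposition. That finite-orbit input is the key geometric fact your argument never invokes, so as written Step~1 does not go through.
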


\begin{remark}
Although we do not discuss monoidal structures in this paper, one can
check that the above equivalence is naturally monoidal with respect to
convolution.
\end{remark}

The proof of Theorem~\ref{first main result} involves two parts:
first, making precise the relation between $\St^u/G$ and the loop
space of $B\bs G/B$, and second, applying general results on loop
spaces and connections from \cite{conns}.

A striking aspect of the theorem is its mixture of quasicoherent sheaves
and $\D$-modules.  We were led to it via the duality of local tamely
ramified Hecke operators in the Geometric Langlands program. In what
immediately follows, we informally explain this motivating picture.

Let $G^\vee$ denote the Langlands dual group, let $LG^\vee$ be its
loop group of maps $\Spec \C((t)) \to G^\vee$, and let $I \subset
LG^\vee$ be an Iwahori subgroup. The traditional definition of the
affine Hecke category is the dg category $D_u(I \bs
LG^\vee/I)$ of perverse sheaves on $LG^\vee$ which are
Iwahori-bimonodromic with pro-unipotent monodromy.  A deep theorem of
Bezrukavnikov~\cite{Roma ICM} (with groundbreaking applications to
Lusztig's vision of the representation theory of Lie algebras in
characteristic $p$, quantum groups, and affine algebras) lifts the
Kazhdan-Lustig realizations of affine Hecke algebras \cite{KL} (or see
\cite{CG} for an exposition) to the level of derived categories
$$
\Coh(\St^u/G) \simeq D_u(I \bs LG^\vee/I).
$$

Our discovery of Theorem~\ref{first main result} was inspired by a
more evident parallel picture on the other side of the above
equivalence.  Namely, by its very definition $LG^\vee$ is a loop
space, and loop rotation equips $D_u(I \bs LG^\vee/I)$ with a natural
locally constant $\Gm$-action, which factors through an
$S^1$-action. The rotation fixed points in $LG^\vee$ are the constant
loops $G^\vee$, and one can show that the localized $S^1$-invariants
in $D_u(I \bs LG^\vee/I)$ form (a periodic version of) the
$B$-bimonodromic finite Hecke category $D_u(B^\vee \bs
G^\vee/B^\vee)$. This is a relatively straightforward application of
equivariant localization in topology, for example using the
differential graded techniques developed by
Goresky-Kottwitz-MacPherson~\cite{GKM}.

Soergel~\cite{Soergel} interpreted the Koszul duality theorem of
Beilinson-Ginzburg-Soergel~ \cite{BGS} in the context of Langlands
duality, identifying (graded or 2-periodic versions of) the finite
Hecke categories $D_u(B^\vee \bs G^\vee/B^\vee)$ and $\cD(\BGB)$.  It
is interesting to note that as a consequence of the above perspective,
we recover the Langlands duality of finite Hecke categories from
Bezrukavnikov's equivalence of affine Hecke categories by equivariant
localization. In particular, this fixes a canonical monoidal form of
this duality. (For a direct verification of monoidal aspects of this
equivalence, see Bezrukavnikov-Yun~\cite{BY}.)


\subsection{Langlands parameters}

Our second main result gives a uniform geometric description of the
categorical Langlands parameters for representations of real groups.

Let $G^\vee$ denote the Langlands dual group, and let $\theta$ be a
quasi-split conjugation of $G^\vee$.  Adams-Barbasch-Vogan \cite{ABV}
and Soergel \cite{Soergel} introduce and investigate categories of
equivariant $\D$-modules which parametrize Harish Chandra modules for
real forms of $G^\vee$ in the inner class of $\theta$.  For a fixed
infinitesimal character, the $\D$-modules live on flag varieties
constructed out of $G$, an involution $\eta$ corresponding to
$\theta$, and the infinitesimal character.  On the level of
Grothendieck groups, the parametrization is a form of Vogan's
character duality ~\cite{Vogan}, while on the level of derived
categories, it is a Koszul duality conjecture due to
Soergel~\cite{Soergel}.

We have aimed to understand this circle of ideas through the lens of
topological field theory.  For example, in the paper \cite{character},
we construct a partial 3d TFT which governs the above objects and has
a natural Langlands duality. Our aim here is to deduce that the
parameter categories of \cite{ABV} and \cite{Soergel} naturally arise
from the 4d Geometric Langlands TFT.  We will not go into the details
of TFT, but rather highlight some of the consequences of this
viewpoint, in particular a new connection of the parameter categories
to affine Hecke categories and a new uniform description of them with
respect to regular infinitesimal character.

\begin{defn}
For an involution $\eta$ of $G$, the {\em Langlands parameter variety}
$\La^\eta$ is defined to be the formal scheme
$$
\La^\eta = \{ (g, B) \in G\times \cB | g\eta(g) \in B\}.
$$ 

For an element $\alpha$ of the universal Cartan $H$, we define the
{\em monodromic Langlands parameter variety} $\La^\eta_\alpha \subset \cL$
to be the subscheme of pairs $(g, B)$ such that the semisimple part of
$g\eta(g)$, with respect to the flag $B$, is in the formal
neighborhood of $\alpha$.
\end{defn}

The group $G$ acts on $\cL$ by twisted conjugation, and one can realize
$\La^\eta/G$ as a kind of loop space. Thanks to a Jordan
decomposition pattern for loops, each of the substacks $\La^\eta_\alpha/G$ has a
natural loop space interpretation resulting in an $\BS$-action on
$\cL_\alpha/G$ with resulting $\Gm$-action contracting to a fixed substack $X_\alpha/G$. (We will make these structures more  precise after developing further notation below.)

\begin{remark}
One can realize $\cL/G$  as
a moduli  of local systems on the M\"obius strip with a flag
along the boundary. From this viewpoint, the $S^1$-action on $\cL/G$ comes from rotating the M\"obius strip.
It is this picture, originating in
topological field theory, which motivated the definition of $\cL/G$.

In general, there
  is no evident compatible $\Gm$-action. We will explain that in fact
  $\La^\eta_\alpha/G$ is itself a unipotent loop space and thus has a canonical
  $\BS$-action. The corresponding $S^1$-action coincides up to a
  central automorphism with that obtained by rotating the M\"obius
  strip.
\end{remark}

We will consider the dg category $\Coh_{[X_\alpha/G]}(\La^\eta_\alpha/G)^\Gm$ of
$\Gm$-equivariant coherent sheaves on the
stack $\La^\eta_\alpha/G$ coherent along $X_\alpha/G$. 

\begin{remark}
Though we do not investigate this structure here, it is
noteworthy that the dg category $\Coh_{[X_\alpha/G]}(\La^\eta_\alpha/G)^\Gm$ is naturally a module
for the appropriate monodromic affine Hecke category.
\end{remark}

Our second main result is the following theorem. We state it more
precisely in Theorem~\ref{categorical Langlands precise} immediately
below after developing further notation.

\begin{thm}[Informal version]\label{cite categorical Langlands}
For any regular $\lambda\in \fh$, the Langlands parameter category for
infinitesimal character $[\lambda] \in \fh/W$ is canonically equivalent to the localized $\BS$-invariants 
$\Coh_{[X_\alpha/G]}(\La^\eta_\alpha/G)^\BS_{loc}$ where $\alpha = \exp(\lambda)\in H$.
\end{thm}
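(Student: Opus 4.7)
The strategy is to mirror the proof of Theorem~\ref{first main result}, but with the symmetric space / Möbius geometry replacing the usual loop geometry. My plan is to realize $\La/G$ as a twisted derived loop space, establish a Jordan decomposition for its components, and then invoke the main theorem of~\cite{conns} to convert localized $\BS$-invariants into $\D$-modules.

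First, following the remark that $\cL/G$ is a moduli of local systems on the Möbius strip with a flag along the boundary, I would give a precise derived-algebro-geometric incarnation of this picture. The Möbius strip is the mapping torus of the antipodal involution on $S^1$, so its $G$-local systems are computed by a twisted loop construction: one should identify $\La/G$ with the homotopy fiber product $(B\bs G/B) \times_{(B\bs G/B)\times (B\bs G/B)} (B\bs G/B)$ where the right map is the diagonal twisted by $\eta$, i.e., $B \mapsto (B,\eta(B))$. Concretely, the point $(g,B)\in \La$ records the monodromy $g$ together with the Borel $B$ on the boundary, and the condition $g\eta(g) \in B$ is exactly the twisted flag-compatibility. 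Under this interpretation, rotation of the boundary circle produces the $S^1$-action, and the associated action of $\Aff(S^1)\rtimes \Gm = \BS$ comes for free from the unipotent-loop formalism of~\cite{conns}.

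Next, I would establish the Jordan decomposition for the twisted loop space. Given $\alpha \in H$, conjugation and the involution endow the twisted element $g\eta(g)\in G$ with a well-defined semisimple part once one uses the Borel $B$ in the pair $(g,B)$; requiring this semisimple part to lie in the formal neighborhood of $\alpha$ cuts out $\La_\alpha/G$. The decomposition assertion is that each stratum $\La_\alpha/G$ is a \emph{unipotent} loop space in the sense of~\cite{conns}, around a fixed locus $\cB^\eta_\alpha/G$ identified with a twisted (real) flag variety for the inner form determined by $(\eta,\alpha)$. The regularity assumption on $\lambda$ (equivalently, on $\alpha$) is what ensures the fixed locus is smooth and that the unipotent-loop hypotheses of~\cite{conns} are met (so that only the formal, not infinitesimal, neighborhood of the semisimple part matters).

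Once $\La_\alpha/G$ is set up as a unipotent loop space with compatible $\BS$-action, the main result of~\cite{conns} applies verbatim: the localized $\BS_-$-invariants of $\qcoh$ of a unipotent loop space recover $\D$-modules on the base, yielding
\[
\qcoh(\La_\alpha/G)^{\BS_-}_{loc} \;\simeq\; \cD\bigl(\cB^\eta_\alpha/G\bigr).
\]
The final step is to identify the right-hand side with the Adams--Barbasch--Vogan/Soergel parameter category at infinitesimal character $[\lambda]$. For this I would compare $\cB^\eta_\alpha/G$ with the ABV flag variety built from $G$, the involution $\eta$, and the twist by $\alpha = \exp(\lambda)$; once $\lambda$ is regular these bases (together with their equivariant structures) coincide by the standard translation of infinitesimal characters into parameters on the dual side, and the equivalence becomes canonical.

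The main obstacle is the last step: showing that the fixed-point stack $\cB^\eta_\alpha/G$ arising intrinsically from the Möbius loop space construction matches, with all its twisted $G$-equivariance, the flag varieties appearing in~\cite{ABV} and~\cite{Soergel}. This is a bookkeeping problem comparing three descriptions (Möbius local systems, Jordan semisimple parts, and real-group parameter varieties), and it is where the otherwise opaque dependence of the ABV flag varieties on the infinitesimal character gets geometrically explained. The loop space interpretation of Step 1 is also nontrivial and must be done carefully in the derived setting to ensure the $S^1$-action and its extension to $\BS$ are identified correctly; but this parallels~\cite{conns} and the proof of Theorem~\ref{first main result}.
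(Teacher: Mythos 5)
Your strategy—realize $\La/G$ as a Möbius-strip moduli, perform a Jordan decomposition to get unipotent loop spaces, then apply the main result of~\cite{conns}—is the same basic architecture as the paper's proof. But there is a genuine gap in your second step that the paper's argument is specifically designed to close. The Möbius realization naturally presents $\La/G$ as the fixed points of a \emph{twisted} $\Z/2\Z$-action on $\cL(\BGB)$: the involution combines $\eta$, swapping the two flags, and crucially the \emph{antipodal map on $S^1$}. To decompose $\La_\alpha/G$ into unipotent loop spaces of the parameter spaces $K_\iota\backslash G(\alpha)/B(\alpha)$, one must first remove this antipodal twist. The paper does this via a short but essential lemma: the antipodal involution on $S^1$ induces the \emph{trivial} action on the affinization $\Aff(S^1) \simeq B\Ga$, because the squaring map $S^1 \to S^1$ is an isomorphism on cohomology and hence on $\Aff(S^1)$. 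This is exactly why passing to \emph{unipotent} loops makes the Möbius-vs-cylinder distinction evaporate, and it is what lets one write $(\cL^u Y)^{\Gamma_{tw}} \simeq (\cL^u Y)^{\Gamma} \simeq \cL^u(Y^\Gamma)$ and then invoke the computation of $(BG)^\Gamma$ and $(\BGB)^\Gamma$ to produce the disjoint union over a set of involutions. Your proposal asserts that each stratum ``is a unipotent loop space around a fixed locus'' but supplies no mechanism for killing the $S^1$-twist, and without that mechanism the claimed decomposition does not follow.

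Two smaller remarks. First, your attribution of the regularity hypothesis is off: the identification of $\La_\alpha/G$ as a (disjoint union of) unipotent loop spaces and the ensuing equivalence of categories hold for \emph{all} $\alpha\in H$; regularity is only needed at the very end to match the resulting category with the Adams--Barbasch--Vogan/Soergel parameter category for infinitesimal character $[\lambda]$. Second, your worry about identifying the rotational $S^1$-action with the $\BS$-action from the unipotent loop realization is well placed but understated: as the paper explains, these two actions actually \emph{differ} by the central automorphism associated to $\alpha$, and one must trivialize this automorphism (using a chosen logarithm of $\alpha$) after localization to compare the corresponding equivariant categories.
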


\begin{remark}
In the case of a complex group (considered as a real form
of its complexification), the theorem is a version (with parameter
$\alpha\in H\times H$) of Theorem \ref{first main result} (to which it
reduces when $\alpha=(e,e)$) which is spelled out in detail in Section
\ref{application to Hecke}.
\end{remark}

The precise version of Theorem~\ref{cite categorical Langlands} proved in this
paper (see Corollaries~\ref{trivmonresult} and ~\ref{genmonresult}) is self-contained and makes no reference to the theory of
\cite{ABV} and \cite{Soergel}. It gives a direct and concrete
description of $\Coh_{[X_\alpha/G]}(\La^\eta_\alpha/G)^\BS_{loc}$ in terms of
equivariant $\D$-modules on flag varieties.  One can then check that
our description coincides with the corresponding Langlands parameter
category of \cite{ABV} and \cite{Soergel} for regular infinitesimal
character.

 Now in order to state a more precise version of Theorem~\ref{cite
   categorical Langlands}, we include here a slightly expanded review
 of the Langlands parameter categories for representations of real
 groups.  Recall that $G^\vee$ denotes the Langlands dual group, and
 $\theta$ a quasi-split conjugation of $G^\vee$ so that $\eta$ is a
 corresponding involution of $G$.

Associated to $\theta$ is a finite collection  (possibly with multiplicities)
$\Theta(\theta)$ of conjugations 
of $\Gv$ all in the same inner class as $\theta$.
For each $\tau \in \Theta(\theta)$, we write $G^\vee_{\R,\tau}\subset
\Gv$ for the corresponding real form.
For each $[\lambda]\in \h/W\simeq (\h^{\vee})^*/W$, we write
$\HC_{\tau, [\lambda]}$ for the  dg category of Harish Chandra modules
for the real form $\Gv_{\R, \tau}$ with pro-completed generalized infinitesimal
character~$[\lambda]$.  
For simplicity, we will restrict our
attention to the case when $[\lambda]$ is regular.  

Fix a semisimple lift $\lambda \in \fg$ of the infinitesimal character
$[\lambda]$, and let $\alpha\in G$ denote the element
$\exp(\lambda)$. Let $G_\alpha\subset G$ be the reductive subgroup
that centralizes $\alpha$, and let $\cB_\alpha=G_\alpha/B_{\alpha}$ be
its flag variety. Consider the finite set of twisted conjugacy classes
$$
\Sigma({\eta,\alpha})=\{\sigma\in G|\sigma\eta(\sigma)=\alpha\}/G.
$$
Each $\sigma\in \Sigma({\eta,\alpha})$ defines an involution of
$G_\alpha$, and we write $K_{\alpha, \sigma}\subset G_\alpha$ for the
corresponding symmetric subgroup.

Soergel~\cite{Soergel} conjectures that there should be a Koszul
duality between categories of Harish Chandra modules and Langlands
parameters
$$
\xymatrix{
\bigoplus_{\tau\in \Theta(\theta)} {\HC}_{\tau,[\lambda]}
 \ar@{<->}[r]^-{?} &  \bigoplus_{\sigma\in \Sigma({\eta,\alpha})}
\D_{\coh}(K_{\alpha,\sigma}\backslash G_\alpha/B_{\alpha})
}
$$ respecting Hecke symmetries.  Soergel establishes this conjecture in
the case of tori, $SL_2$ and most importantly, for complex groups
$\Gv$ (considered as real forms of their complexifications). In the
complex case, the conjecture (for $\alpha=(e, e)$) reduces to the Langlands
duality for finite Hecke categories described above.

Soergel's conjecture is a categorical form of the results of Adams,
Barbasch and Vogan \cite{ABV}, who found an interpretation of the
Langlands parametrization of admissible representations of real groups
in terms of equivariant sheaves. These authors deduce the above
conjecture on the level of Grothendieck groups from Vogan's character
duality~\cite{Vogan}, and combine it with the microlocal geometry of
the cotangent bundles $T^*(K_{\alpha,\sigma}\backslash
G_\alpha/B_{\alpha})$ to study Arthur's conjectures.  As mentioned in
\cite{ABV}, it is important to find a way to fit together the spaces
$K_{\alpha,\sigma}\backslash G_\alpha/B_{\alpha}$, for varying
$\alpha$. In particular, this is necessary if one hopes to have a
uniform picture for representations with different infinitesimal
characters.

One of the outcomes of this paper is a solution to this problem in the
form of the Langlands parameter variety $\La^\eta$. The crucial change of
perspective is that loop spaces rather than cotangent bundles are a
more natural classical format for encoding the quantum geometry. 
The key insight is an identification
$$
\cL_\alpha/G \simeq  \coprod_{\sigma\in \Sigma({\eta,\alpha})}
\cL^u(K_{\alpha,\sigma}\backslash G_\alpha/B_{\alpha})
$$
proved in Corollary~\ref{final result}. In particular, as a unipotent loop space, $\cL_\alpha/G$ comes equipped
with a natural $\BS$-action
with resulting $\Gm$-action contracting to the fixed substack 
$$
X_\alpha/G =  \coprod_{\sigma\in \Sigma({\eta,\alpha})}
K_{\alpha,\sigma}\backslash G_\alpha/B_{\alpha}
$$

Now with
 the above preparations behind us,
we can state 
a more precise version of Theorem~\ref{cite categorical Langlands}.

\begin{thm}\label{categorical Langlands precise}
For any $\alpha\in H$, there is a canonical equivalence
$$\Coh_{[X_\alpha/G]}(\La^\eta_\alpha/G)^\BS_{loc} \simeq  \bigoplus_{\sigma\in \Sigma({\eta,\alpha})}
\D_\coh(K_{\alpha,\sigma}\backslash G_\alpha/B_{\alpha})
$$
between the  localized $\BS$-invariants  
and the Langlands parameter category.
\end{thm}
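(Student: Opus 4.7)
The plan is to reduce Theorem~\ref{categorical Langlands precise} to the conjunction of two inputs: a concrete geometric identification of $\La_\alpha/G$ as a disjoint union of unipotent loop spaces indexed by $\Sigma(\eta,\alpha)$, and the general loops-to-$\D$-modules principle of~\cite{conns}. Granting these, the theorem becomes bookkeeping: pass the $\BS$-equivariant localization functor through the disjoint union and identify each summand with a $\D$-module category.

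The geometric input, which I would prove first and which is essentially Corollary~\ref{final result}, is the identification
$$
\cL_\alpha/G \;\simeq\; \coprod_{\sigma\in\Sigma(\eta,\alpha)} \cL^u\!\bigl(K_{\alpha,\sigma}\backslash G_\alpha/B_\alpha\bigr).
$$
To produce this, I would realize $\La/G$ as the loop stack of $G/G^{\eta\text{-}tw}$ (the twisted adjoint quotient) with a flag along the boundary, so that a point of $\La_\alpha/G$ is a ``twisted loop'' $g\eta(g)$ together with a compatible Borel. Jordan decomposition for this twisted loop gives a semisimple part $(g\eta(g))_s$ and a unipotent part $(g\eta(g))_u$. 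The requirement that $(g\eta(g))_s$ lie in the formal neighborhood of $\alpha$ stratifies $\La_\alpha/G$ by the twisted conjugacy class of the semisimple part. Choosing a representative $\sigma$ with $\sigma\eta(\sigma)=\alpha$ for each class in $\Sigma(\eta,\alpha)$, the stratum labelled by $\sigma$ becomes the locus of unipotent twisted loops in the centralizer of $\alpha$, i.e. unipotent loops in the adjoint quotient of $G_\alpha$ by the involution determined by $\sigma$; the compatible Borel then upgrades this to a loop in $K_{\alpha,\sigma}\backslash G_\alpha/B_\alpha$. The fact that after passing to the centralizer $G_\alpha$ the involution is the \emph{ordinary} adjoint action of $K_{\alpha,\sigma}$ (and not a twisted one) is the place where the subgroups $K_{\alpha,\sigma}$ and the set $\Sigma(\eta,\alpha)$ enter naturally. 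In particular, each summand is a genuine unipotent loop space in the sense of~\cite{conns}, so it carries a canonical $\BS$-action, and one verifies that this $\BS$-action matches the one on $\La_\alpha/G$ induced from the loop-rotation / dilation structure (up to the central automorphism noted in the preceding remark).

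With this identification in hand, the theorem follows from the main theorem of~\cite{conns} applied summand by summand: for any reasonable stack $X$, taking bounded-above $\BS$-invariants on $\qcoh(\cL^u X)$ and localizing the $\hbar$-action yields $\D(X)$. Applying this to $X=K_{\alpha,\sigma}\backslash G_\alpha/B_\alpha$ for each $\sigma\in\Sigma(\eta,\alpha)$, and commuting $(-)^{\BS_-}_{loc}$ with the finite disjoint union, produces exactly the desired right-hand side. The specialization $\alpha = e$, $\eta = \mathrm{id}$ (on $G\times G$) recovers Theorem~\ref{first main result}, which serves as a consistency check.

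The main obstacle, and the step I would spend most of my effort on, is the Jordan decomposition identification: one must show that the Jordan stratification of the twisted loop variety $\La_\alpha$ is not just a set-theoretic decomposition but an equivalence of derived stacks, and that after restricting to the formal neighborhood of a semisimple twisted conjugacy class the resulting formal slice \emph{is precisely} the unipotent loop space of a quotient of the centralizer — including the derived/formal structure needed to make $\cL^u$ behave correctly. The compatibility of the $\BS$-actions on the two sides of the identification, though conceptually clear from rotating the M\"obius strip, also requires care because on the left the action comes from the geometric rotation while on the right it comes from the intrinsic unipotent loop space structure; reconciling them is where the ``central automorphism'' of the preceding remark intervenes and must be tracked explicitly.
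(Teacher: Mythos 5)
Your high-level strategy matches the paper exactly: reduce the theorem to the geometric identification $\cL_\alpha/G \simeq \coprod_{\sigma}\cL^u(K_{\alpha,\sigma}\bs G_\alpha/B_\alpha)$ (the paper's Corollary~\ref{final result}), then apply the loops-to-$\D$-modules corollary of~\cite{conns} summand by summand. Where you diverge is in how you propose to establish that geometric identification, and here the paper takes a route that dissolves the very obstacle you flag at the end.

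You propose a \emph{direct} Jordan decomposition of the twisted loop $g\eta(g)$, stratifying $\La_\alpha/G$ by the twisted conjugacy class of the semisimple part and then identifying each stratum as a unipotent loop space of a symmetric-subgroup quotient. You correctly warn that the hard part is upgrading this set-theoretic stratification to an equivalence of derived/formal stacks. The paper avoids this altogether. Instead it first proves the \emph{untwisted} monodromic identification $\St_{\alpha,w\alpha}/G \simeq \cL^u(B(\alpha)\bs G(\alpha)/B(\alpha))$ (Theorem~\ref{monodromic steinberg variety as loop space}), which is the clean Jordan decomposition with no involution in sight; it then realizes $\La/G$ as $\Gamma_{tw}$-fixed points of $\St/G$ (where $\Gamma_{tw}$ twists the flip involution by the antipodal map on $S^1$), and observes in Lemma~\ref{key lemma} that the antipodal map induces the \emph{trivial} automorphism of $B\Ga = \Aff(S^1)$ because the squaring map is the identity on $H^*(S^1,k)$. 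Hence $(\cL^u Y)^{\Gamma_{tw}} \simeq (\cL^u Y)^{\Gamma} \simeq \cL^u(Y^{\Gamma})$, and the twisted case reduces formally to the untwisted one plus the computation $(\BGB)^\Gamma \simeq \coprod_\iota K_\iota\bs G/B$ (Corollary~\ref{sym flags}, built on Theorem~\ref{fixed points on BG}). Your symmetric subgroups $K_{\alpha,\sigma}$ and the indexing set $\Sigma(\eta,\alpha)$ then drop out of the concrete description of $(BG(\alpha))^\Gamma$ as $\{g : g\eta(g)=1\}/G(\alpha)$, rather than from a stratification argument. The upshot: your plan is sound but the step you correctly identify as the crux --- making the twisted Jordan decomposition a derived equivalence --- is precisely what the paper finesses by commuting $\cL^u$ with $\Gamma$-fixed points and noting the antipodal twist is invisible at the level of unipotent loops. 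If you pursue your direct route you would in effect be reproving Lemma~\ref{key lemma} in a harder disguise; I would recommend adopting the paper's fixed-point reformulation, which also packages the $\BS$-compatibility question (your last concern) more cleanly, since the $\BS$-action on the right comes along for the ride through the equivalences rather than needing a separate reconciliation.
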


The proof of Theorem~\ref{categorical Langlands precise} involves a
detailed study of the interaction of the involution $\eta$ with the
equivalence of Theorem~\ref{first main result}.
A remarkable aspect of this argument is the appearance of the
complicated combinatorics of parameters for Harish Chandra modules
from nothing more than a formal construction involving the well
understood combinatorics of Weyl groups.


\subsection{Nilpotent cone and commuting variety}
We conclude with a final simple illustration of the general
theory of ~\cite{conns}. We will realize Rees modules 
(as recalled in Section~\ref{section: background} below) on the nilpotent cone in terms of
quasicoherent sheaves on the commuting variety. (With appropriate finiteness conditions,
one can similarly approach $\D$-modules as in the preceding sections.
But unlike the loop spaces of our previous examples, the commuting variety has
 nontrivial derived structure.
This means 
 the formulations are more complicated and we will only informally describe them.)

Let us first consider Rees modules on the adjoint quotient $G/G$ from the
perspective of loop spaces. As recalled in Section~\ref{section: background} below, we are thus led to consider the double
loop space 
$$\cL(G/G)=\cL(\cL {\rm B}G)\simeq \Map(T^2, {\rm B}G)\simeq \on{Loc}_G(T^2),
$$ the
derived stack of maps from the two-torus $T^2$ into ${\rm B}G$, or equivalently,
of $G$-local systems on $T^2$. By writing $T^2$ as a wedge of circles
with a disk attached, we can describe $\cL(G/G)$ as the derived stack
of pairs of commuting elements up to
conjugation 
$$\cL(G/G)\simeq \{(g_1, g_2) \in G\times G |\,
g_1g_2=g_2g_1\}/G.
$$ 
In other words, $\cL(G/G)$ is the derived fiber product modulo conjugation
$$
\cL(G/G) \simeq ((G\times G) \times_G \{e\})/G
$$
where the first map is the commutator $(g_1, g_2) \mapsto g_1g_2g^{-1}_1 g^{-1}_2$, and the second is
the inclusion of the identity $e\in G$.

In order to describe Rees modules on $G/G$, we need in fact only
consider formal loops in $G/G$, which are easily seen to be
$$
\Lhat(G/G)\simeq \{(g_1, g_2) \in G\times \wh G |\,
g_1g_2=g_2g_1\}/G.
$$ 
where $\wh G\subset G$ denotes the formal group.
 Observe that this is a kind of commuting variety where the second element is infinitesimally close to the identity.
 
 As an immediate consequence of results of~\cite{conns}
 recalled in Section~\ref{section: background} below,
 we have the following.
 
%

\begin{thm} 
There is a canonical equivalence 
$$
\cR_{G/G}\topmodule_\Z
\simeq 
\qcoh(\Lhat(G/G))^\BS
$$
between 
complete graded Rees modules on the adjoint quotient $G/G$
 and 
 $\BS$-invariant
 quasicoherent sheaves
  on the formal loop space $\Lhat(G/G)$.
\end{thm}

Now let us play with the theorem to find a more symmetric formulation.
We will replace the appearance of the entire group $G$ and the formal group $\wh G$ with the compromise
$G^u$ consisting of the formal neighborhood of unipotent elements. 

On the one hand, we can restrict to sheaves supported on $G^u \subset G$ on both sides of the theorem.
On the left hand side, we can consider the full subcategory 
$$
\cR_{G^u/G}\topmodule_\Z
\subset
\cR_{G/G}\topmodule_\Z
$$
of complete 
 graded Rees modules supported on $G^u/G$.
 On the right hand side, 
 we can consider the full subcategory 
 $$
\qcoh(\Lhat(G^u/G))^\BS
\subset
\qcoh(\Lhat(G/G))^\BS
$$
of
  $\BS$-invariant
 quasicoherent sheaves
  supported on $G^u/G$.

On the other hand, there is a close relationship between the formal loop space  $\Lhat(G/G)$ and the 
unipotent loop space 
$$
\cL^u(G/G)\simeq \{(g_1, g_2) \in G\times G^u |\,
g_1g_2=g_2g_1\}/G
$$ 
Namely, we have an inclusion $\Lhat (G/G)\subset \cL^u(G/G)$ which is a base change of the natural inclusion $\wh G \subset G^u$.
This provides a
restriction  map
$$
\xymatrix{
\qcoh(\cL^u(G/G))^\BS
\ar[r] &
\qcoh(\Lhat(G/G))^\BS
}
$$
which fails to be an equivalence only because there is a kernel consisting of sheaves which are very far from finitely generated. 

Now let us introduce the {\em derived unipotent commuting variety} 
$$
\cC^u= \{(g_1, g_2) \in
   G^u\times G^u |\, g_1g_2=g_2g_1\}
   $$
   where as usual  $G^u/G$ denotes the formal neighborhood of the unipotent cone.
   Its quotient by the natural $G$-action by simultaneous conjugation is the double unipotent loop space 
   $$
   \cC^u/G=\cL^u(\cL^u({\rm B}G))
   $$ 
   which naturally
classifies $G$-local systems on the two-torus $T^2$ with unipotent
monodromies. 

Observe that $\cC^u/G$ carries two commuting
$\Gm$-actions given by rescaling the two unipotent group
elements. Moreover, its realization as double loops (or as local
systems) makes apparent two commuting $S^1$-actions.  
Altogether, we
have an action of the product $\BS\times \BS$,
but we will restrict our attention to
the second factor of $\BS$. 

Putting together our previous observations, the above theorem provides a natural functor
$$
\xymatrix{
\qcoh(\cC^u/G)^\BS
\ar[r] &
\cR_{G^u/G}\topmodule_\Z
}$$
from $\BS$-invariant quasicoherent sheaves on $\cC^u/G$ to 
   complete 
 graded Rees modules supported on $G^u/G$.
Furthermore, if we restrict to appropriately finite sheaves, the functor becomes an equivalence.
Going one step further and localizing, we obtain an equivalence between appropriately
finite sheaves and  $\D$-modules on the nilpotent cone $\cN \subset \fg$.

This connects the category at the heart of Springer theory, with another category of
great current interest, see in particular the recent work of
Schiffmann and Vasserot \cite{SV1,SV2} and Ginzburg \cite{Ginzburg}.
The commuting variety (in the case of $GL_n$) is closely related to
the Hilbert scheme of points in $\C^n$, and its Grothendieck group of
$\Gm\times\Gm$-equivariant coherent sheaves is linked to the theory of
Macdonald polynomials and double affine Hecke algebras\footnote{While
  the authors cited above consider commuting elements in the Lie
  algebra rather than in the formal neighborhood $G^u$ of the unipotent cone, their
  $\Gm\times\Gm$-equivariant $K$-groups are equivalent.}.  In
particular, the interpretation via local systems on the torus
identifies this $K$-group (via the geometric Langlands conjecture)
with the elliptic Hall algebra and the theory of Eisenstein series.

\begin{remark}
  It would be interesting to consider the fully invariant version
  $\qcoh(\cC^u/G)^{\BS \times \BS}$ and various localizations. Any
  construction respecting the symmetries of the torus can be expected
  to carry a canonical $SL_2(\Z)$-action.
\end{remark}


\section{Loop spaces and flat connections}\label{section: background}



We review here some of the relevant results of \cite{conns}. It is
written in the language of derived algebraic geometry over a fixed
$\Q$-algebra $k$, and in this paper, we will always work over $\C$.
As it turns out, the geometric objects which arise in this paper (with
the exception of the commuting variety) are underived stacks (in fact,
quotients of quasi-projective varieties or formal schemes by reductive
groups), but we will make substantial use of results which depend on
the flexible context of derived stacks.

In broad outline, to arrive at derived algebraic geometry from
classical algebraic geometry, one takes the following steps. First, one replaces
commutative $k$-algebras by {\em derived $k$-algebras}, namely simplicial (or
equivalently, for $k$ a $\Q$-algebra, connective differential graded) commutative
$k$-algebras. This allows for derived intersections by replacing the
tensor product of rings with its derived functor.  Second, one
considers functors of points on derived rings that take values not
only in sets but in simplicial sets or equivalently topological
spaces. This allows for derived quotients by keeping track of gluings
in the enriched theory of spaces.

Roughly speaking, a {\em derived stack} is a functor from derived
rings to topological spaces satisfying an sheaf axiom with respect to
the \'etale topology on derived rings.
We will denote by $\DSt$ the
$\oo$-category of derived stacks over (the fppf site of) $\C$.
 Examples
include all of the schemes of classical algebraic geometry, stacks of
modern algebraic geometry, along with topological spaces in the form
of locally constant stacks.  Any derived stack has an ``underlying''
underived stack, obtained by restricting its functor of points to
ordinary rings. In fact, derived stacks can be viewed as
nil-thickenings of underived stacks, just as supermanifolds are
infinitesimal thickenings of manifolds.

One of the complicated aspects of the theory is that the domain of
derived rings, target of topological spaces, and functors of points
themselves must be treated with the correct enriched homotopical
understanding.  We recommend To\"en's excellent survey \cite{Toen} (as
well as \cite{conns}) for more details and references.  It was our
introduction to many of the notions of derived algebraic geometry, in
particular derived loop spaces.


\subsection{Three kinds of loops}


We consider the circle $S^1={\rm B}\Z=K(\Z,1)$ as a (locally constant)
stack.  Since we work in characteristic zero, its algebra of cochains
is formal
$$\cO(S^1) = C^*(S^1,k)\simeq H^*(S^1,k)=k[\eta]/(\eta^2),\quad
|\eta|=1.$$ In fact, $\cO(S^1)$ is the free symmetric algebra (in the
graded sense) on a single generator of degree one.  Thus its
affinization (the functor corepresented by its algebra of cochains) is
an odd version of the affine line, namely the classifying stack of the
additive group
$$\Aff(S^1)\simeq {\rm B}\Ga\simeq K(\Ga,1)\simeq \aline[1].$$ This is
the stack that assigns to a $k$-algebra $R$ the space ${\rm B}R$, or
equivalently (in the case of $R$ discrete) the Eilenberg-MacLane space
$K(R,1)$, where we consider $R$ as an additive group.  We will prefer
the notation ${\rm B}\Ga$ to emphasize the group structure on
$\Aff(S^1)$.

\begin{defn} The {\em loop space} of a derived stack $X$ is the derived
  mapping stack $$\cL X=\Map_{\DSt}(S^1,X).$$

The {\em unipotent loop space} of $X$ is the
  derived mapping stack 
  $$\cL^u X=\Map_{\DSt}(\Aff(S^1),X).
  $$ 
  
 The {\em formal loop space} of  $X$ is the formal completion
 of $\cL X$ along the constant loops 
 $$\Lhat X=\widehat{\cL X}_X.
 $$
 \end{defn}

Pulling back along the affinization homomorphism $S^1\to \Aff(S^1)$ defines a
morphism $\cL^u X\to \cL X$. This is an equivalence when $X$ is a
derived scheme (quasi-compact with affine diagonal). More generally,
when $X$ is a geometric stack (Artin with affine diagonal), formal
loops are unipotent: the morphism 
factors
$$\Lhat X \to \cL^u X \to \cL X.$$

\subsection{The basic example}\label{basic example}
It will be useful throughout the paper to keep the following
perspective in mind. Suppose $X$ classifies some kind of object in the
sense that maps $S\to X$ form the space of such objects over $S$. Then
the loop space $\cL X$ classifies families of such objects over the
circle $S^1$. (It is worth emphasizing that a family of objects over a
topological space is by necessity locally constant.)  Likewise, the
formal loop space $\Lhat X$ classifies families over $S^1$ that are
infinitesimally close to a trivial family, and the unipotent loop
space $\cL^u X$ classifies algebraic one-parameter families of
automorphisms.

We illustrate the above notions with the basic motivating example
$X={\rm B}G = pt/G$, the classifying stack of a reductive group $G$, so
$X$ classifies principal $G$-bundles. We have the elementary
identifications of loop spaces
$$
\cL ({\rm B}G) \simeq G/G
\qquad
\cL^u ({\rm B}G) \simeq G^u/G
\qquad
\Lhat ({\rm B}G) \simeq \wh G/G
$$ where all of the quotients are with respect to conjugation, $G^u$
is the formal neighborhood of the unipotent elements of $G$, and $\wh
G$ is the formal group of $G$.  In other words, $\cL {\rm B}G$ classifies
$G$-local systems on $S^1$. If
we trivialize such a local system at a point, its monodromy gives an
element of $G$. Forgetting the trivialization passes to the adjoint
quotient $G/G$.  Likewise, $\Lhat {\rm B}G$ classifies local systems whose
monodromy is in the formal neighborhood of the identity, and $\cL^u
{\rm B}G$ classifies local systems with unipotent monodromy.

Recall the characteristic polynomial map 
$$
\xymatrix{
\chi:\cL ({\rm B}G) \simeq G/G \ar[r] & H\quot W
}
$$
where $H\quot W = \Spec \cO(H)^W$ denotes the affine quotient.
By Chevalley's Theorem, $\chi$ induces an equivalence on global functions 
$$
\xymatrix{
\chi^*:\cO(H)^W \ar[r]^-\sim & \cO(G/G)
}
$$ and thus $H\quot W$ is the affinization of $\cL ({\rm B}G) \simeq G/G$.
From the point of view of local systems, the characteristic polynomial
map simply remembers the eigenvalues of the monodromy.

Observe that the identification $\cL ({\rm B}G)\simeq G/G$ restricts to an
identification
$$\cL^u ({\rm B}G) \simeq \chi^{-1}(\wh H\quot W)
$$ where $\wh H$ denotes the formal group of $H$.  More generally, fix
a semisimple $\alpha\in G$ with class $[\alpha]\in H\quot W$, and let 
 $G(\alpha)\subset
G$ be the centralizer of $\alpha$. Then we have an identitication
$$\cL^u({\rm B}G(\alpha))\simeq \chi^{-1}(\wh H_{W\cdot\alpha}\quot W)$$ 
where $\wh H_{W\cdot\alpha}$ denotes the formal neighborhood of the Weyl orbit $W \cdot \alpha\subset H$.

\begin{remark}[Jordan decomposition]

We can interpret the above identifications in terms of
{\em Jordan decomposition} of elements of $G$. Namely, separating out the semisimple part of elements breaks up the loop space $\cL {\rm B}G$ into unipotent
loop spaces. 

One can imitate the Jordan decomposition for the loop space $\cL X$ of
a general geometric stack. We will perform this decomposition
explicitly for examples associated with flag varieties in Sections
\ref{Steinberg section} and \ref{Langlands section}. We merely note
here that the underlying underived stack of $\cL X$, the inertia stack
$IX$, is an affine group scheme over $X$, and so we may speak of the
semisimple part of any loop $\gamma\in \cL X$ as a point of inertia
$\gamma_{ss}\in IX$. One may then combine this with the notion of
unipotent loop introduced above and extend the Jordan decomposition of
inertia to a decomposition of the loop space in terms of unipotent
loop spaces of a collection of associated stacks. Combined with the
relation of sheaves on unipotent loops to $\D$-modules recalled below,
this provides an approach to understanding sheaves on the entire loop
space.
\end{remark}

\subsection{Rotating loops}\label{reasonable}

The circle $S^1$, and hence the loop space $\cL X$, carries a natural
rotational $S^1$-action.  The affinization $\Aff(S^1) \simeq {\rm B}\Ga$ of the circle, and
hence the unipotent loop space $\cL^u X$, carries a natural
translational ${\rm B}\Ga$-action so that the inclusion $\cL^u X\to \cL X$
is equivariant for the natural homomorphism $S^1 \to \Aff(S^1) \simeq {\rm B}\Ga$. In addition, $\cL^u X$ carries a compatible action
of the multiplicative group $\Gm$ induced by its action on $\Ga$. This
$\Gm$-action is an expression of the formality of
$\cO(S^1)=C^*(S^1,k).$

It is shown in \cite{conns} that the translational  ${\rm B}\Ga$-action on $\cL^u X$ is completely determined by the data of a degree one vector
field with square zero. Restricted to formal loops, this vector field is identified with
the de Rham differential. Let $\BT_X[-1] =\Spec_{\cO_X}
\Omega_X^{-\bullet}$ denote the odd tangent bundle of $X$, and $\wh
\BT_X[-1] $ its formal completion along the zero section.

\begin{thm}[\cite{conns}]
Let $X$ be a geometric stack. There is a canonical identification
$$ \Lhat X \simeq \wh \BT_X[-1]
$$
such that the odd vector field of the $S^1$-action of loop rotation corresponds
to that given by the de Rham differential.
\end{thm}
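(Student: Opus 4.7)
The plan is to prove this Hochschild--Kostant--Rosenberg--type comparison by reducing first to the affine smooth case, invoking the classical HKR theorem there, and then bootstrapping to geometric stacks via smooth descent. Throughout, the two sides of the comparison should be viewed as functors from (derived) geometric stacks to pointed formal derived stacks over $X$ equipped with an odd vector field, so it suffices to produce a natural equivalence at the level of these enhanced functors.

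First I would reduce to the affine case. Both assignments $X\mapsto \Lhat X$ and $X\mapsto \wh{\BT_X[-1]}$ preserve smooth descent: the formal loop construction commutes with finite limits and is compatible with the cotangent complex, while the odd tangent bundle is built directly from $\Omega^1_X$. For a geometric stack $X$ represented as the colimit of the \v Cech nerve of a smooth affine atlas $U\to X$, both sides are obtained by applying the corresponding construction to $U$ and descending; hence a natural equivalence on smooth affine derived schemes will produce one on $X$.

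For $X=\Spec A$ smooth and affine, I would identify $\cL X$ with $\Spec$ of the derived self-intersection $A\otimes^{\mathbf L}_{A\otimes A}A$, i.e.\ with the Hochschild chain complex of $A$. The classical derived HKR theorem yields a canonical equivalence $A\otimes^{\mathbf L}_{A\otimes A}A\simeq \Sym_A(\Omega^1_A[1])$, which geometrically reads $\cL X\simeq \BT_X[-1]$ as stacks over $X$. For affine schemes the loop space is already formal over $X$ (any loop factors through the constants up to nilpotent data), so the formal completion does nothing and we obtain $\Lhat X\simeq \wh{\BT_X[-1]}$ on this class of inputs. This step is essentially standard and I would just cite HKR in its derived form.

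The nontrivial content, and the main obstacle, is matching the $S^1$-action of loop rotation with the de Rham differential under the HKR equivalence. The rotation action on $\cL X$ corresponds on Hochschild chains to the Connes cyclic operator $B$, and the Loday--Quillen--Tsygan calculation shows that under HKR the operator $B$ goes to the de Rham differential $d_{dR}$ on $\Omega^\bullet_A$; this is exactly the statement that the degree one, square-zero vector field on $\wh{\BT_X[-1]}\simeq \Spec_{\cO_X}\Omega^{-\bullet}_X$ recording the $B\Ga$-action on $\Lhat X$ is $d_{dR}$. I would then upgrade this matching to a comparison of $S^1$-equivariant (equivalently, mixed complex) structures, using that in characteristic zero the formality $\cO(S^1)\simeq k[\eta]/\eta^2$ reduces an $S^1$-action to the datum of such an odd vector field. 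Finally, checking that all constructions---HKR, the cyclic $B$-operator, and the de Rham differential---are natural enough to descend through the smooth atlas (so that the comparison glues over the \v Cech nerve) completes the passage to geometric stacks. The delicate point is ensuring this naturality is canonical and compatible with the $S^1$-action, which is precisely what is established in \cite{conns}.
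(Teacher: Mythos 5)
The theorem you are asked about is only \emph{cited} in this paper from the companion work \cite{conns}; no proof is given here, so there is no in-paper argument to compare against directly. Your sketch does capture the main conceptual ingredients that drive the proof in \cite{conns}: the affine case is exactly a derived HKR statement, the loop rotation corresponds under HKR to the Connes $B$-operator, and formality $\cO(S^1)\simeq k[\eta]/\eta^2$ in characteristic zero lets one encode the $S^1$-action by a single square-zero odd vector field, which HKR/Loday--Quillen--Tsygan match with $d_{dR}$. That part of the outline is sound.

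The place where the write-up is too breezy, and where the real technical content of \cite{conns} lives, is the reduction from geometric stacks to the affine case. It is not accurate to say that ``the formal loop construction commutes with finite limits'' settles smooth descent: $\cL$ preserves limits, but a geometric stack is a \emph{colimit} of its \v Cech nerve, and neither $\cL$ nor $\BT_{(-)}[-1]$ commutes with that colimit in the naive sense. For example, $\cL(\mathrm{pt})=\mathrm{pt}$ while $\cL(BG)=G/G$, and $\BT_{\mathrm{pt}}[-1]=\mathrm{pt}$ while $\BT_{BG}[-1]=\g/G$; so one cannot ``apply the construction to $U$ and descend'' without accounting for the relative directions. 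What one must actually prove is that for a smooth cover $U\to X$, the induced map $\Lhat U\to \Lhat X$ is again a cover whose \v Cech nerve is $\Lhat(U^\bullet)$ (this uses that $\cL$ preserves fiber products, that formal completion is compatible with these fiber products, and a surjectivity/lifting statement for formal loops), together with the parallel statement that $\wh{\BT_U[-1]}\to \wh{\BT_X[-1]}$ is a smooth cover (here one uses that $U\to X$ smooth means $L_{U/X}$ is a vector bundle, so $\BT_U[-1]\to \BT_X[-1]\times_X U$ is an affine bundle). Only after establishing both of these, plus the $S^1$-equivariant naturality of the affine HKR comparison across the simplicial object, does the colimit argument close. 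So the approach is essentially the one in \cite{conns}, but the descent step you treat as a formality is the nontrivial geometric input; your proposal would need to supply these cover/\v Cech-nerve compatibilities to be complete.

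One smaller caveat: in the affine step you should be explicit that ``smooth affine'' means a smooth \emph{derived} affine (an atlas of a derived geometric stack need not be classical), and that the HKR equivalence $A\otimes^{\mathbf L}_{A\otimes A}A\simeq \Sym_A(L_A[1])$ in that generality, while true over $\Q$, is itself a theorem requiring formality-type input rather than the classical HKR computation.
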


\begin{remark}
The theorem is a generalization to arbitrary
geometric stacks of the exponential map identifying the adjoint
quotients of the completed Lie algebra $\wh{\g}/G$ and formal group
$\wh{G}/G$.
\end{remark}

The above theorem provides a relation between equivariant sheaves on the
loop space of a smooth stack and sheaves with flat connection on the stack itself. 

Consider the equivariant cohomology ring 
$$
\cO({\rm B}S^1) \simeq H^*({\rm B}S^1, k)\simeq k[u], \mbox{ with $u$ of degree $2$}.
$$  
The affinization morphism $S^1\to {\rm B}\Ga$ induces an identification $\cO({\rm B}S^1) \simeq \cO({\rm B}{\rm B}\Ga)$, 
and hence there is a natural $\Gm$-action
on the cohomology such that $u$ has weight $1$.

Define the graded Rees algebra $\cR_{X}$ to be the sheaf of $\Z$-graded $k$-algebras
$$
\cR_{X}=\bigoplus_{i\geq 0} u^i \D_{X}^{\leq i} \subset \cD_X \otimes_k k[u]
$$ where $\cD_X^{\leq i} \subset \cD_X$ denotes differential operators
of order at most $i$.  Note that weights and cohomological degrees in
$\cR_X$ are related ($\cR_X$ is concentrated in positive cohomological
degrees, which are twice the weight degrees).

Let $\cR_X\topmodule_\Z$ denote the stable $\oo$-category of graded
quasicoherent sheaves on $X$ equipped with the compatible structure of
graded complete $\cR_X$-module.

\begin{thm}[\cite{conns}]
For $X$ a smooth geometric stack, there is a canonical equivalence of stable $\oo$-categories
$$ \qcoh(\Lhat X)^{\BS}
\simeq\cR_{X}\topmodule_\Z. 
$$
\end{thm}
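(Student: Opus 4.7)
The plan is to combine the preceding theorem (which identifies $\Lhat X$ with the odd formal tangent bundle $\wh\BT_X[-1]$) with a Koszul duality between the de Rham complex and the Rees algebra of differential operators, then check that the $\BS$-equivariance translates correctly.

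First, by the preceding theorem there is a canonical equivalence $\Lhat X\simeq \wh\BT_X[-1] = \Spec_{\cO_X}\Omega_X^{-\bullet}$, so $\qcoh(\Lhat X)$ is canonically equivalent to graded complete modules over $\Omega_X^{-\bullet}$. Under this equivalence, the odd vector field generating the $B\Ga$-action (equivalently, the $S^1$-action after affinization, since formal loops are unipotent) is identified with the de Rham differential $d_{dR}$.

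Next, I would unwind $\BS = \Aff(S^1) \rtimes \Gm \simeq B\Ga\rtimes \Gm$-equivariance on this side. The $\Gm$-action provides a weight grading compatible with the grading on $\Omega_X^{-\bullet}$, where $\BT_X[-1]^\vee$ sits in weight one (so the form degree equals the weight). As shown in \cite{conns}, the datum of a compatible $B\Ga$-action on a $\Gm$-equivariant object is equivalent to a grading-preserving square-zero odd vector field; here this field is $d_{dR}$, so the resulting category is that of graded complete dg modules over $(\Omega_X^{-\bullet}, d_{dR})$.

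Finally, I would apply Koszul duality in its graded form. Classically (in the spirit of Kapranov and Beilinson-Drinfeld), the de Rham complex $(\Omega_X^{-\bullet}, d_{dR})$ is Koszul dual to the ring of differential operators $\cD_X$; remembering the $\Gm$-grading so that weights on the form side match filtration degrees on the operator side (and cohomological degrees are doubled so that the generator $u$ has weight one and degree two) refines this to an equivalence between graded dg modules over $(\Omega_X^{-\bullet}, d_{dR})$ and graded modules over $\cR_X = \bigoplus_{i\geq 0} u^i \cD_X^{\leq i}$. The completeness condition on the form side (inherited from the formal completion $\Lhat X$) matches the ``topological'' completeness packaged in $\cR_X\topmodule_\Z$, which corresponds on $\D$-modules to the reasonability condition of Section \ref{reasonable}.

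The main obstacle, in my view, is not the conceptual picture (the $\BS$-equivariance and the rescaled bidegrees simply unwind from the previous theorem) but the rigorous implementation of Koszul duality at the $\oo$-categorical level while tracking both the grading and the completion. In particular, one must verify that the equivalence intertwines the completion along the zero section of $\wh\BT_X[-1]$ with the appropriate completed Rees module condition, and that the equivalence is well-behaved on the full stable $\oo$-category rather than just on homotopy categories; this is where the technical content of \cite{conns} is doing the heavy lifting.
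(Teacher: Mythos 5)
The paper states this theorem as a citation to \cite{conns} without reproving it; the surrounding exposition (the odd vector field of the $B\Ga$-action identified with the de Rham differential, the graded Rees algebra $\cR_X$ and its shear equivalence) indicates exactly the route you sketch. Your outline --- identify $\Lhat X$ with $\wh\BT_X[-1]$, read $\BS$-equivariance as a $\Gm$-weight grading together with a grading-preserving square-zero odd differential identified with $d_{dR}$, then apply graded completed Koszul duality against $\cR_X$ --- is the intended one, and you correctly locate the genuine technical content in the $\oo$-categorical implementation and the matching of the two completions, which is precisely what \cite{conns} supplies.
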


\begin{remark}
The classical Rees algebra $\cR^{cl}_X$ is positively graded by weight but lives in cohomological degree zero.
As explained in \cite{conns}, 
there is a canonical shear equivalence
$$
\cR^{cl}_{X}\topmodule_\Z\simeq \cR_{X}\topmodule_\Z.
$$
\end{remark}

To recover $\D_X$-modules from $\cR_X$-modules, we must localize. On the one hand, any
category of $S^1$-equivariant (or ${\rm B}\Ga$-equivariant) sheaves such as $ \qcoh(\Lhat X)^{\BS}$ is linear over the
equivariant cohomology ring 
$
\cO({\rm B}S^1)\simeq \cO({\rm B}{\rm B}\Ga) \simeq  k[u].
$
On the
other hand, by construction, the algebra $k[u]$ maps to the graded Rees algebra
$\cR_X$ as a central subalgebra.
 
 When we invert $u$ and localize the categories, we will denote the
 result by the subscript ${loc}$.  
So for a compatibly graded $k[u]$-linear stable $\oo$-category $\cC$, we have the
new stable $\oo$-category $\cC_{loc}$ which can be identified with 
$\cC \ot_{k[u]}k[u,u\inv].
$
If $\cC$ were not graded, $\cC_{loc}$ would be only
 $\Z/2\Z$-graded since $u$ is of 
 cohomological degree $2$. But when we localize a compatibly graded category, the localized category maintains a usual cohomological degree.
The natural projection $p:\cC\to \cC_{loc}$
satisfies 
$$p(c[-2]\langle 1\rangle) = p(c)
$$
where $[-2]$ denotes the cohomological shift by two, and $\langle 1\rangle$ denotes the weight shift by one.

Now to describe a localized version of the equivalence of the previous theorem, we introduce the following further notions.
First we assume that $X$ is smooth. Further, we will assume $X$ has the property that its loop space $\cL X$ is in fact underived. 
As explained in Proposition~\ref{dg structure of loops} below, for the quotient of a quasi-projective variety
by an affine group scheme,
this is equivalent to there being finitely many orbits.

We will write $\Coh_{[X]}(\cL^u X)^\BS$ for the dg category of $\BS$-equivariant coherent sheaves on 
the unipotent loop space $\cL^u X$
whose restriction to $X$ is coherent (or
equivalently,  perfect since $X$ is smooth).
We refer to 
$\Coh_{[X]}(\cL^u X)^\BS$ as the dg category of $\BS$-equivariant coherent sheaves on
 $\cL^u X$ coherent along  $X$.

In the rest of the paper, the only result of \cite{conns} we will apply is the following.

\begin{corollary}[\cite{conns}]
Assume $X$ is a smooth geometric stack with underived loop space.
There is a canonical equivalence
 of stable $\oo$-categories 
$$
\Coh_{[X]}(\cL^u X)^{\BS}_{loc}
   \simeq \cD_\coh(X).
$$
\end{corollary}

Let us sketch the reduction of the above assertion to a local statement. Let $U\to X$ be a smooth morphism from a smooth affine scheme. By construction, the dg category $\cD_\coh(X)$ is equivalent to  the limit of the natural diagram of dg categories that assigns $ \D_\coh(U)$ to any such map $U\to X$. 
  By the results of \cite{conns} and Koszul duality, 
this diagram  is in turn equivalent to the natural diagram that assigns $ \Coh_{/U}(\cL U)^{\BS}_{loc}$ to any such map
 $U\to X$. 
 Here we write $\Coh_{/U}(\cL U)$ for the dg category of quasicoherent sheaves on  $\cL U$ that are coherent (or equivalently, perfect since $X$ is smooth) over $U$.
 
Now to arrive at the above corollary, we need only check that  $\Coh_{[X]}(\cL^u X)^{\BS}_{loc}$
is equivalent to the limit $\lim_U \Coh_{/U}(\cL U)^{\BS}_{loc}$.

By the results of \cite{conns}, we know that $\qcoh(\Lhat X)$ is
equivalent to the limit $\lim_U \qcoh(\cL U)$ compatibly with all
symmetries. Thus it remains to check that this equivalence restricts
to an equivalence from $\Coh_{[X]}(\cL^u X)^\Gm$ to the limit $\lim_U
\Coh_{/U}(\cL U)^{\Gm}$. This comes down to two elementary statements.

First, one checks that the natural restriction map $\Coh(\cL^u X)^\Gm
\to \Coh(\Lhat X)^\Gm$ on all $\Gm$-equivariant coherent sheaves is an
equivalence. Here the simple but key observation is that $\Lhat X$ is
the completion of $\cL^u X$ along the natural closed embedding
$X\hookrightarrow \cL^u X$, and the $\Gm$-action contracts $\cL^u X$
back onto $X$. A more precise formulation of the last observation is
that the $\Gm$-weights in the functions $\cO_{\cL^u X}$ are
non-positive, and the ideal $\cI_X \subset \cO_{\cL^u X}$ consists
precisely of functions with strictly negative weights. With this in
hand, it is not difficult to check the analogous and sufficient local
statement about the restriction of finitely generated graded modules
over a graded ring to such a completion.

Second, applying the preceding, we are left to check  that a $\Gm$-equivariant coherent sheaf on $\cL^u X$ restricts along $X\to \cL^u X$ to a coherent sheaf if and only if its pullback along $\cL U \to \cL^u X$ is coherent over $U$. For this assertion, one need only observe that the map $\cL U \to \cL^u X$ factors through the canonical maps  $\cL U \to U \to X \to \cL^u X$ since $\cL^u X$ is underived (by assumption) and $\cL U$ is a derived thickening of $U$ (since $U$ is a scheme).



\section{Steinberg varieties as loop spaces}\label{Steinberg section}

In this section, we explain how one can view equivariant Steinberg
varieties as loop spaces.  The entire equivariant
Grothendieck-Steinberg variety is a loop space, and its
fixed-monodromy subspaces are unipotent loop spaces.

\subsection{The Grothendieck-Springer resolution via loops}

We begin with a warm-up, identifying the Grothendieck-Springer
resolution in terms of loop spaces. We follow the blueprint of our
basic example ${\rm B}G$, discussed in Section \ref{basic example} above.

Let ${\rm B}B = pt/B$ denote the classifying stack of a Borel subgroup $B \subset G$.
We have the elementary identifications of loop spaces
$$
\cL ({\rm B}B) \simeq B/B
\qquad
\cL^u ({\rm B}B) \simeq B^u/B
\qquad
\Lhat ({\rm B}B) \simeq \wh B/B
$$

Recall the Grothendieck-Springer simultaneous resolution 
$$
\xymatrix{
\cB & \ar[l]_-\pi \wt G = \{(g, B) \in G\times \cB\, |\, g\in B\} \ar[r]^-\mu & G.
}
$$
We have the ordered eigenvalue map 
$$
\xymatrix{
\tilde \chi: \wt G \ar[r] & H
& 
\tilde\chi (g, B) = [g]\in B/U \simeq H.
}
$$ The fiber $ \tilde \chi^{-1}(e)\subset \wt G$ over the identity
$e\in H$ is the traditional Springer resolution where the group
element $g$ is required to be unipotent.

Observe that there is a natural identification 
$$
\cL ({\rm B}B) \simeq B/B \simeq \wt G/G 
$$ 
where the quotients are with respect to conjugation.
Since the ordered eigenvalue map $\tilde \chi$ is invariant under conjugation,  it descends to 
 to the adjoint quotient 
$$
\xymatrix{
\tilde \chi: \wt G/G \ar[r] & H
}
$$ 
This map in turn induces an equivalence on global functions 
$$
\xymatrix{
\tilde \chi^*:\cO(H) \ar[r]^-\sim & \cO(\wt G/G)
}
$$
and thus $H$ is the affinization of $\cL ({\rm B}B)\simeq \wt G/G$.

\begin{remark}
We can view $\cL({\rm B}B) \simeq B/B$ as $B$-local systems on the circle
$S^1$. Of course, it is obviously equivalent to $G$-local systems with
a $B$-reduction. Alternatively, it is also equivalent to $G$-local
systems on $S^1$ with a monodromy-invariant $B$-reduction at a point
of $S^1$.  If we trivialize the $G$-local system at the point, we
obtain an element of $\wt G$.
\end{remark}


\subsection{Loops in flag varieties}


We will use the name Grothendieck-Steinberg variety $\St_{}$ for the
fiber product
$$
\St =\wt G\times_G \wt G = \{(g, B_1, B_2) \in G\times \cB\times\cB\, |\, g\in B_1\cap B_2\}.
$$
 In general,
$\St_{}$ is connected, but has irreducible components  labeled
by the Weyl group $W$.  Hence if $G$ is noncommutative,
$\St_{}$ is singular.

The following is the fundamental observation that motivated this paper.

\begin{thm} \label{steinberg variety as loop space}
We have a canonical identification of the equivariant Steinberg variety as a loop space
$$
\cL (B\bs G/B) \simeq \St/G.
$$ 
Moreover, the Steinberg variety is underived.
\end{thm}

\begin{proof}
We combine the standard identifications 
$$B\bs G/B\simeq \cB \times \cB/G\simeq {\rm B}B \times_{{\rm B}G} {\rm B} B$$ with the 
loop space identification $$\wt{G}/G\simeq B/B\simeq \cL({\rm B} B).$$ Since loops and fiber products commute (both are limits), we calculate the loop space of $B\bs G/B$ as the fiber product of loop spaces 
$$\cL (B\bs G/B)
\simeq\cL({\rm B} B)\times_{\cL({\rm B} G)} \cL({\rm B} B)\simeq \wt{G}/G \times_{G/G} \wt{G}/G.$$
Finally, consider the universal morphism (associated to the fiber product on the right)
$$
\xymatrix{
 \St/G = (\wt{G} \times_{G} \wt{G})/G \ar[r] & \wt{G}/G \times_{G/G} \wt{G}/G 
}
$$ 
It is an equivalence since its base change over $pt\to {\rm B}G$ is an equivalence (since the fiber product on the right distributes over the base change).

The fact that $\St/G$ is underived (the coincidence of derived and naive fiber products) follows from the fact that the Grothendieck-Springer resolution $\mu:\wt G\to G$ is a semi-small resolution. Alternatively, we may deduce it from Proposition~\ref{dg structure of loops} immediately following.
\end{proof}

\begin{prop}\label{dg structure of loops}
  Suppose $X = Y/G$, where $Y$ is a quasi-projective variety and $G$ is
  affine. Then the loop space $\cL X$ has trivial derived structure if
  and only if there are finitely many $G$-orbits in $Y$.
  \end{prop}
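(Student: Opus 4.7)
The plan is to present $\cL X$ as a derived fiber product and reduce triviality of its derived structure to a dimension count on the classical inertia. By definition of the derived mapping stack, for $X=Y/G$ one has
$$\cL X \simeq \bigl((Y\times G)\times^{h}_{Y\times Y} Y\bigr)/G,$$
where the morphism from $Y\times G$ to $Y\times Y$ is $(y,g)\mapsto(y,gy)$ and the morphism from $Y$ is the diagonal $\Delta$. The underlying classical stack is the ordinary inertia $IY/G$ with $IY=\{(y,g)\in Y\times G\mid gy=y\}$, so ``trivial derived structure'' amounts to the vanishing of the higher $\Tor_{\cO_{Y\times Y}}^{>0}(\cO_{Y\times G},\cO_Y)$ on this locus.

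Next I would reduce this Tor vanishing to a dimension count via the regular-embedding presentation of the diagonal. Assume $Y$ is smooth (which is the case in the applications, e.g.\ $Y=\cB\times\cB$; the general quasi-projective case is handled by restricting to a smooth $G$-invariant open and tracking the singular locus separately). Then $\Delta:Y\hookrightarrow Y\times Y$ is a regular closed embedding of codimension $d=\dim Y$, cut out locally by a regular sequence of $d$ functions. Pulling this Koszul resolution of $\cO_Y$ back along $(y,g)\mapsto(y,gy)$ yields a Koszul complex on $Y\times G$ with zeroth cohomology $\cO_{IY}$. Since $Y\times G$ is smooth, hence Cohen--Macaulay, this Koszul complex is a resolution (equivalently, all higher Tors vanish) if and only if the pulled-back equations form a regular sequence in $\cO_{Y\times G}$, which is equivalent to $IY$ having everywhere its expected codimension $d$, i.e.\ $\dim_{(y,g)} IY=\dim G$ at every point.

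The last step is to translate this dimension condition into an orbit count via the stabilizer stratification. By upper-semicontinuity of stabilizer dimension, the loci $Y_r=\{y\in Y\mid \dim G_y=r\}$ are locally closed in $Y$, and the inertia piece $IY\cap(Y_r\times G)$ has dimension $\dim Y_r+r$. Any single orbit inside $Y_r$ has dimension $\dim G-r$, contributing a piece of dimension exactly $\dim G$. If $Y$ has only finitely many orbits, then each non-empty $Y_r$ is a finite union of $(\dim G-r)$-dimensional orbits, so $\dim Y_r=\dim G-r$ and $\dim IY=\dim G$ everywhere; by the previous step the derived structure is then trivial. Conversely, if $Y$ has infinitely many orbits, then by pigeonhole some $Y_r$ contains infinitely many orbits; since all orbits in $Y_r$ have the common dimension $\dim G-r$ and each irreducible component of $Y_r$ can contain at most one top-dimensional orbit (as two would both be dense open and disjoint), one concludes $\dim Y_r>\dim G-r$, so $\dim IY|_{Y_r}>\dim G$ and the higher Tors are nonzero.

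The main obstacle is the ``infinitely many orbits $\Rightarrow$ nontrivial derived structure'' direction, in which a set-theoretic hypothesis must be converted into a genuine dimension jump. The key observation that unlocks this is the uniformity of orbit dimension inside a single stabilizer stratum $Y_r$, which forces any infinite family of orbits there to strictly enlarge $Y_r$. Secondary technical points, such as globalizing the local Koszul argument across all of $Y\times G$ and relaxing smoothness to quasi-projectivity, are routine given the Cohen--Macaulay setup and do not introduce new ideas.
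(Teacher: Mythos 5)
Your proof is correct and takes essentially the same approach as the paper's: realize $\cL X$ as a derived fiber product, and reduce trivial derived structure to a dimension count showing that the classical inertia $IY$ has its expected dimension $\dim G$ if and only if there are finitely many orbits, each orbit contributing a piece of exactly that expected dimension. The paper packages the fiber product slightly differently (as the intersection of the graphs of the source and target maps of the groupoid inside $X_0\times X_1$) and is terse on the converse direction; your stabilizer stratification $Y_r$ and the Koszul-resolution reduction fill in those details and also make explicit the smoothness hypothesis on $Y$ that the paper's appeal to ``expected dimension'' implies Cohen--Macaulayness silently uses as well.
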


\begin{proof}
  We represent $X$ as a groupoid scheme with $X_0=Y$ as scheme of
  objects (0-simplices) and $X_1=G\times Y$ as scheme of morphisms (1-simplices).
  It is convenient to rewrite the pushforward to $X$ of the structure
  sheaf $\cO_{\cL X}$ as the descent of the derived tensor product
$$
\cO_{X_1} \otimes^\bbL_{X_0\times X_1} \cO_{X_1}
$$
where $X_1$ maps to $X_0\times X_1$ by the product maps $\ell\times \id_{X_1}$
and $r\times \id_{X_1}$.
This can be viewed as the structure sheaf of the derived intersection of the subschemes
$\Gamma_\ell,\Gamma_r\subset X_0\times X_1$ given by the graphs of $\ell, r$ respectively.
Here loops are thought of as pairs of $1$-simplices that are equal
and such that the left end of the first is glued to the right end of the second.

Now our assertion will follow from a simple dimension count.
Let $n_0$ and $n_1$ be the dimensions of $X_0$ and $X_1$ respectively.
On the one hand,
the expected dimension of the intersection $\Gamma_\ell\cap \Gamma_r$ inside of $X_0\times X_1$
is given by $n_1+n_1 - (n_0 + n_1) = n_1 - n_0$. On the other hand,
each isomorphism class of objects of $X$
contributes a subscheme of precisely dimension $n_1-n_0$ to the intersection.
Thus the intersection has the expected dimension if and only if there is no nontrivial moduli
of isomorphism classes of objects.
\end{proof}
%


\begin{remark}
We can view $\BGB \simeq {\rm B}B \times_{{\rm B}G} {\rm B}B$ as the moduli of $G$-local
systems on the interval $[0,1]$ with $B$-reductions at the end points
$\{0,1\}$. Similarly, we can view the equivariant
Grothendieck-Steinberg variety $ \cL (B\bs G/B) \simeq \St/G $ as the
moduli of $G$-local systems on the cylinder $Cyl = [0,1] \times S^1$
with $B$-reductions at the boundary circles $\{0,1\} \times S^1$. \end{remark}


\subsection{Fixed monodromy}

We have two copies of the ordered eigenvalue map 
$$ \xymatrix{ \tilde \chi: \St \ar[r] & H \times H & \tilde\chi (g,
  B_1, B_2) = ([g]_1, [g]_2)\in B_1/U_1 \times B_2/U_2 \simeq H \times
  H.  }
$$ The fiber $\tilde \chi^{-1}(e, e)\subset \St $ over the identity
$(e, e)\in H\times H$ is the usual Steinberg variety, where the group
element $g$ is required to be unipotent.

The image of $\tilde \chi$ consists of pairs of elements $(\alpha, \beta) \in H\times H$ related by the
Weyl group action:
$\beta = w\cdot\alpha$, for some $w\in W$. 
In other words, it consists of the union 
$$
\cH = \coprod_{w\in W} \{ (h, w\cdot h) | h\in H\}\subset H\times H
$$ 
of the graphs of the automorphisms of $H$
given by Weyl group elements.

Since the ordered eigenvalue map $\tilde\chi$ is invariant under
conjugation, it descends to the adjoint quotient
$$
\xymatrix{
\tilde \chi: \St/G \ar[r] & \cH.
}
$$ 
This map in turn
realizes $\cH$ as the affinization
of $\St/G$.


Fix an element $(\alpha, w\cdot\alpha) \in \cH$, and let $\wh \cH_{\alpha, w\cdot \alpha}$ be its formal neighborhood.
We will use the name monodromic Steinberg variety  for the inverse image
$$
\St_{\alpha, w\cdot\alpha} = \tilde\chi^{-1}(\wh \cH_{\alpha, w\cdot \alpha})
$$

%

Let $\cO_\alpha\subset G$ denote the semisimple conjugacy class
corresponding to $\alpha$. Fix once and for all an element $\tilde
\alpha\in\cO_\alpha$, and let $G(\alpha)\subset G$ denote its
centralizer. In general, $G(\alpha)$ is reductive of the same rank as $G$, and often turns
out to be a Levi subgroup. 

We will affix the symbol $(\alpha)$ to our usual notation when
referring to objects associated to $G(\alpha)$. 
So for example, we
write $B(\alpha)\subset G(\alpha)$ for a Borel subgroup.

The following key variation on Theorem~\ref{steinberg variety as loop
  space} further justifies the importance of loop spaces to Langlands
parameters for representations.

\begin{thm} \label{monodromic steinberg variety as loop space} For any
  $\alpha\in H$, and $w\in W$, we have a canonical identification of
  the unipotent loop space and monodromic Steinberg variety
  $$
  \cL^u (B(\alpha)\bs G(\alpha)/B(\alpha)) \simeq \St_{\alpha, w\alpha}/G
  $$
\end{thm}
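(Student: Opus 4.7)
The strategy is to extend the Jordan decomposition for $\cL(BG) \simeq G/G$ recalled in Section~\ref{basic example} to the loop space $\cL(B\bs G/B) \simeq \St/G$ identified by Theorem~\ref{steinberg variety as loop space}. Under this identification, the affinization morphism becomes the ordered eigenvalue map $\tilde\chi: \St/G \to \cH$, and the monodromic Steinberg $\St_{\alpha, w\alpha}/G$ is the preimage of the formal neighborhood $\wh\cH_{\alpha, w\alpha}$. From the loop perspective, its points are loops in $B\bs G/B$ whose semisimple monodromy, measured through the two Borel reductions, is infinitesimally close to the pair $(\alpha, w\alpha)$ in the universal Cartan.

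The equivalence is constructed by normalizing the semisimple part. Given $(g, B_1, B_2) \in \St_{\alpha, w\alpha}$, the Jordan decomposition $g = g_s g_u$ yields commuting elements both lying in $B_1 \cap B_2$. Using the $G$-action, one conjugates $g_s$ into the formal neighborhood of the fixed lift $\tilde\alpha$, leaving the stabilizer $G(\alpha)$ as the residual symmetry. The eigenvalue constraint forces the relative position of $(B_1, B_2)$ as Borels of $G$ to lie in the coset $w^{-1} W(\alpha)$, where $W(\alpha)$ is the Weyl group of $G(\alpha)$; and since $\tilde\alpha \in B_1 \cap B_2$, the intersections $B'_i := B_i \cap G(\alpha)$ are Borel subgroups of $G(\alpha)$ with $g_u \in B'_1 \cap B'_2 \cap G(\alpha)^u$. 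The assignment $(g, B_1, B_2) \mapsto (g_u, B'_1, B'_2)$ then defines a morphism
$$
\St_{\alpha, w\alpha}/G \longrightarrow \St(\alpha)^u/G(\alpha),
$$
and applying Theorem~\ref{steinberg variety as loop space} to $G(\alpha)$, restricted to the formal neighborhood of $(e,e)$ in its own ordered eigenvalue map (i.e., to unipotent loops), identifies the target with $\cL^u(B(\alpha)\bs G(\alpha)/B(\alpha))$.

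The main obstacle is verifying that this morphism is a derived equivalence, and in particular that the answer is independent of $w$. Underived, one uses that the variety of Borels of $G$ containing the semisimple element $\tilde\alpha$ decomposes as a disjoint union of copies of $\cB(\alpha)$ indexed by $W/W(\alpha)$, so that pairs of Borels of $G$ through $\tilde\alpha$ with a specified $G$-relative position $w^{-1}$, modulo the $G(\alpha)$-action, recover $\cB(\alpha) \times \cB(\alpha)/G(\alpha)$; the $W/W(\alpha)$ degrees of freedom in the lifts are absorbed into the $G$-orbit quotient, which accounts for the independence from $w$. At the derived level, the formal neighborhoods $\wh\cH_{\alpha, w\alpha} \subset \cH$ and $\wh\cH(\alpha)_{(e,e)} \subset \cH(\alpha)$ each possess $|W(\alpha)|$ branches coming from coincident graphs in the respective unions, and the Jordan decomposition must be carried out compatibly with these branch structures so that the two derived thickenings (infinitesimal variation of $g_s$ around $\tilde\alpha$ versus infinitesimal non-unipotency in $G(\alpha)$) are matched term by term. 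Once this is established, both sides are direct-image-equivalent as derived stacks, completing the proof.
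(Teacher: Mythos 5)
Your overall strategy is in the same family as the paper's: normalize the semisimple part to the fixed lift $\tilde\alpha$, reduce to the centralizer $G(\alpha)$, and match the monodromic Steinberg variety of $G$ with the unipotent Steinberg variety of $G(\alpha)$. The paper, however, runs the map in the opposite direction, building it as a composite
$$
\cL^u\bigl(B(\alpha)\bs G(\alpha)/B(\alpha)\bigr) \simeq \St(\alpha)_{e,e} \xrightarrow{\ \cdot\,\tilde\alpha\ } \St(\alpha)_{\alpha,\alpha} \longrightarrow \St_{\alpha, w\alpha},
$$
where the first arrow is multiplication by the \emph{central} element $\tilde\alpha\in G(\alpha)$, and the second fixes the group element and selects the unique Borels $B_1, B_2\subset G$ with $B_i\cap G(\alpha)=B(\alpha)_i$ and the prescribed monodromicities $[g]_1\in\wh H_\alpha$, $[g]_2\in\wh H_{w\alpha}$.

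The genuine gap in your version is the formula $(g, B_1, B_2)\mapsto (g_u, B'_1, B'_2)$. Extracting the literal unipotent part $g_u$ discards precisely the formal direction that carries the derived structure on both sides. Once you have normalized $g_s$ into the formal neighborhood of $\tilde\alpha$, write $g_s = \tilde\alpha\cdot\epsilon$ with $\epsilon$ a formal torus element; your map forgets $\epsilon$ entirely and lands in the honest unipotent cone, so it cannot be an equivalence of formal derived stacks. You flag this yourself when you say the ``two derived thickenings must be matched term by term,'' but you never produce the matching, and the explicit map you wrote does not effect it. The correct recipe is the one the paper uses: keep $g$ intact and instead translate by $\tilde\alpha^{-1}$, i.e.\ send $(g, B_1, B_2)$ to $(\tilde\alpha^{-1} g,\, B_1\cap G(\alpha),\, B_2\cap G(\alpha))$. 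Since $\tilde\alpha$ is central in $G(\alpha)$, this is an honest isomorphism onto $\St(\alpha)_{e,e}$ that preserves the formal/derived thickening, and the $|W(\alpha)|$-branch bookkeeping and $w$-independence you worry about are then absorbed by passing to the quotients $G(\alpha)$ and $G$. Your discussion of the relative position of $(B_1,B_2)$ lying in a coset of $W(\alpha)$ is also imprecise as an argument for descent to the quotient stacks; the paper sidesteps this by characterizing the target Borels $B_1, B_2$ directly by the two stated properties and then quotienting, which is both cleaner and makes the stack-level assertion immediate.
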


\begin{proof}
  First, note that multiplication by the central element $\tilde \alpha\in G(\alpha)$ provides an equivalence
  $$
\xymatrix{
 \cL^u (B(\alpha)\bs G(\alpha)/B(\alpha)) \simeq \St(\alpha)_{e,e}\ar[r]^-\sim & \St(\alpha)_{\alpha,\alpha}
}  $$
  where $e\in H$ is the identity.
  
Now for each $w\in W$, one can check that there is a map
$$
\xymatrix{
\St(\alpha)_{\alpha,\alpha}\ar[r] & \St_{\alpha,w\alpha}
&
(g, B(\alpha)_1, B(\alpha)_2) \ar@{|->}[r] & (g, B_1, B_2)
}
$$
uniquely characterized by the properties:
$$
B_1\cap G(\alpha) = B(\alpha)_1 \quad B_2\cap G(\alpha) =
B(\alpha)_2
$$
$$
[g]_1 \in \wh H_\alpha \quad [g]_2 \in \wh H_{w\alpha}
$$
where $[g]_1,[g]_2$ denote the classes of $g$ in $B_1/U_1$,
$B_2/U_2$ respectively.

Passing to the respective quotients gives the sought-after
isomorphism.
\end{proof}

%

An interesting aspect of the theorem is the general ``discontinuity''
of the objects appearing on the left hand side. From a geometric
perspective, the quotients $B(\alpha)\backslash G(\alpha)/B(\alpha)$ do not
form a nice family as we vary the parameter $\alpha$.
But the theorem says that the loop spaces of these quotients do fit
into the nice family $\tilde \chi:\St/G\to \cH\subset H\times H$. Here
we should emphasize that we are thinking about $\St/G$ as a loop
space, rather than along the more traditional lines identifying the
usual Steinberg variety $\wt{\chi}^{-1}(e,e)$ with the union of
conormals to $B$-orbits in $\cB$. In the discussion to follow, we describe similar results for
geometric parameter spaces for real reductive groups. In that context,
it is only the loop spaces that fit together into a nice family, not
the cotangent bundles.

\subsection{Application to Hecke categories}\label{application to Hecke}

Let us focus on the most interesting case of trivial monodromy
when $\alpha$ is the identity $e\in H$.  By Theorem~\ref{monodromic
  steinberg variety as loop space}, we have a canonical identification
  $$
  \cL^u (B\bs G/B) \simeq \St_{e, e}/G
  $$ where the Steinberg variety $\St^u=\St_{e, e}$ is nothing
  more than the fiber product
  $$
\St^u =\wt G^u\times_{G^u} \wt G^u = 
\{(g, B_1, B_2) \in G^u\times \cB\times\cB\, |\, g\in B_1\cap B_2\},
$$
where $G^u\subset G$ is the formal neighborhood of the unipotent elements, and $\wt G^u$ is its Springer resolution 
$$
 \wt G^u = \{(g, B) \in G^u\times \cB\, |\, g\in B\}.
 $$
 
Now our results from \cite{conns}, as recalled in 
Section~\ref{section: background}, immediately provide
    a canonical equivalence of $\oo$-categories
$$
\Coh_{[(\cB \times \cB)/G]}(\St^u/G)^{\BS}_{loc} \simeq \D_{\coh}(B\bs G/B).
$$
Observe that on the one hand, the underlying dg category
$\Coh_{[(\cB \times \cB)/G]}(\St^u/G)^{\Gm}$ is our version of the affine Hecke category $\cH^{\it{aff}}_G$. On
the other hand, the dg category $\D_{\coh}(B\bs G/B)$ is the finite Hecke
category $\cH_G$. Thus we arrive at the following fundamental relationship.

\begin{corollary}\label{steinberg main cor}
 The localized $\BS$-invariants with bounded above weight in the affine Hecke category
  $\cH^{\it{aff}}_G$ are canonically equivalent to the finite Hecke
  category $\cH_G$.
\end{corollary}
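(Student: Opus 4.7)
The plan is to deduce this immediately by composing two equivalences already in place. The first is the loop space identification of the unipotent Steinberg variety furnished by Theorem~\ref{monodromic steinberg variety as loop space}; the second is the general Corollary from \cite{conns} recalled at the end of Section~\ref{section: background}, which for any smooth geometric stack $X$ identifies $\qcoh(\cL^u X)^{\BS_-}_{loc}$ with $\cD(X)$. Once both inputs are in hand, the conclusion reduces to unwinding definitions.

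Concretely, I would first specialize Theorem~\ref{monodromic steinberg variety as loop space} to the trivial parameters $\alpha = e \in H$ and $w = e \in W$. In that case $G(\alpha) = G$ and $B(\alpha) = B$, and the monodromic Steinberg variety $\St_{e,e}$ is exactly the unipotent Steinberg variety $\St^u$: the formal neighborhood $\wh\cH_{e,e}$ of $(e,e) \in H\times H$ imposes $[g]_1, [g]_2 \in \wh H_e$, which is equivalent to $g \in G^u$. The theorem therefore yields a canonical equivalence of derived stacks $\cL^u(B\bs G/B) \simeq \St^u/G$, and hence a canonical equivalence of dg categories $\qcoh(\cL^u(B\bs G/B)) \simeq \qcoh(\St^u/G) = \cH^{\aff}_G$.

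The step I expect to be the main obstacle is to check that, under this identification, the canonical $\BS$-action on the unipotent loop space matches the dilation $\BS$-action on $\St^u/G$ declared in the remark following the definition of $\cH^{\aff}_G$. On the loop side the $\BS$-action is built from the $B\Ga$-translation on $\cL^u$ together with the $\Gm$-rescaling of the cochain generator $\eta$ of $\cO(S^1) \simeq k[\eta]/(\eta^2)$; on the Steinberg side only the dilation $\Gm$-action on unipotent elements is named explicitly, and the $B\Ga$-piece must be recovered from the derived loop structure. The verification is infinitesimal: via the identification $\Lhat BG \simeq \wh\BT_{BG}[-1]$ of Section~\ref{section: background}, pulled back along the two Borel reductions cutting out the Steinberg variety, the exponential map identifies $\wh G^u$ with $\wh{\g}^{\,\textrm{nil}}$; under this identification the $B\Ga$-translation becomes the additive structure (equivalently the de Rham vector field), while the weight-one $\Gm$-action on $\eta$ rescales the exponential coordinate, reproducing the declared dilation on $G^u$.

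With these two inputs secured, the corollary follows by concatenation: specializing the Corollary from \cite{conns} to the smooth geometric stack $X = B\bs G/B$ gives
$$
(\cH^{\aff}_G)^{\BS_-}_{loc}
= \qcoh(\St^u/G)^{\BS_-}_{loc}
\simeq \qcoh(\cL^u(B\bs G/B))^{\BS_-}_{loc}
\simeq \cD(B\bs G/B)
= \cH_G,
$$
where the first and last equalities are the definitions of the affine and finite Hecke categories, the middle equivalence is the $\BS$-equivariant identification of the previous paragraph, and the final equivalence is the cited Corollary. This is the claimed canonical equivalence.
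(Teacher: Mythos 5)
Your argument is essentially identical to the paper's: specialize Theorem~\ref{monodromic steinberg variety as loop space} at $\alpha = e$ to identify $\cL^u(B\bs G/B) \simeq \St^u/G$, then apply the Corollary from \cite{conns} with $X = B\bs G/B$. The extra paragraph you devote to matching the loop-space $\BS$-action with the dilation action on $\St^u/G$ is a reasonable sanity check, but the paper already absorbs this point into its general framework (the dilation $\Gm$-action is asserted in the introduction to be the one induced by the formality of $\cO(S^1)$), so the paper's own proof omits it.
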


For general monodromy $\alpha\in H$,
again by Theorem~\ref{monodromic steinberg variety as loop space},
we have a canonical identification
   $$
  \cL^u (B(\alpha)\bs G(\alpha)/B(\alpha)) \simeq \St_{\alpha, w\alpha}/G
  $$ Thus we can transport the canonical $\BS$-action on
  the unipotent loop space of the left hand side to the monodromic
  Steinberg variety of the right hand side.

\begin{corollary}\label{steinberg general monodromy}
For general $\alpha\in H$, we have a canonical equivalence
$$
\Coh_{[B(\alpha)\bs
G(\alpha)/B(\alpha)]}(\St_{\alpha, w\alpha}/G)^{\BS}_{loc} \simeq \D_{\coh}(B(\alpha)\bs
G(\alpha)/B(\alpha)).
$$
\end{corollary}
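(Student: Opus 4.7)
The plan is to obtain the equivalence by chaining two results already at our disposal: the loop-space interpretation of $\St_{\alpha,w\alpha}/G$ supplied by Theorem~\ref{monodromic steinberg variety as loop space}, and the general identification of localized $\BS$-invariants on unipotent loops with reasonable $\D$-modules recalled at the end of Section~\ref{section: background}.

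First, set $X_\alpha = B(\alpha)\bs G(\alpha)/B(\alpha)$. Since $G(\alpha)$ is reductive and $B(\alpha)$ is a Borel, $X_\alpha$ is a smooth geometric stack (the quotient of the smooth flag variety $\cB(\alpha) = G(\alpha)/B(\alpha)$ by $B(\alpha)$ acting in the standard way), so the hypotheses of the loop-space corollary from \cite{conns} are satisfied. Theorem~\ref{monodromic steinberg variety as loop space} supplies a canonical identification
$$
\cL^u X_\alpha \;\simeq\; \St_{\alpha, w\alpha}/G,
$$
and therefore an identification of quasicoherent sheaf categories. Transporting the canonical $\BS$-action on $\cL^u X_\alpha$ (rotation of unipotent loops, combined with the dilation $\Gm$-action from the formality of $\cO(S^1)$) across this identification defines the $\BS$-action on $\St_{\alpha,w\alpha}/G$ used in the statement. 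Applying the corollary from \cite{conns} to $X_\alpha$ now yields
$$
\qcoh(\St_{\alpha,w\alpha}/G)^{\BS_-}_{loc}
  \;\simeq\; \qcoh(\cL^u X_\alpha)^{\BS_-}_{loc}
  \;\simeq\; \cD(X_\alpha),
$$
which is the desired equivalence.

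The main point that deserves care is confirming that the isomorphism in Theorem~\ref{monodromic steinberg variety as loop space} really respects the $\BS$-structure one would expect on $\St_{\alpha,w\alpha}/G$. That isomorphism was built as a composite of two moves: multiplication by the central semisimple element $\tilde\alpha \in G(\alpha)$, which translates trivially-monodromic loops to $(\alpha,\alpha)$-monodromic ones and commutes with the loop structure because $\tilde\alpha$ is central (in particular it produces no new $\Gm$-weight); and the embedding of $G(\alpha)$-parabolic data into $G$-parabolic data, which is an identification of underlying classical stacks and is manifestly equivariant for the $\Ga$-action encoding the de Rham vector field and for the dilation $\Gm$ rescaling the unipotent part of $g$. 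In particular, $\St_{\alpha,w\alpha}/G$ is itself a unipotent loop space (of $X_\alpha$), so the $S^1$-action factors through $B\Ga = \Aff(S^1)$ and is compatible with $\Gm$, combining into a genuine $\BS$-action as required for the statement. With this compatibility in hand, the corollary is immediate.
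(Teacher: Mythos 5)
Your argument is correct and matches the paper's own route exactly: identify $\St_{\alpha,w\alpha}/G$ with $\cL^u(B(\alpha)\bs G(\alpha)/B(\alpha))$ via Theorem~\ref{monodromic steinberg variety as loop space}, transport the canonical $\BS$-action across this identification (which is precisely how the paper defines the $\BS$-action appearing in the statement), and then invoke the corollary from~\cite{conns} identifying $\qcoh(\cL^u X)^{\BS_-}_{loc}$ with $\cD(X)$ for the smooth geometric stack $X = B(\alpha)\bs G(\alpha)/B(\alpha)$. Your closing paragraph about compatibility of the $\BS$-structure is sound but, since the action is by definition the transported one, it is not logically required; the paper addresses the comparison with the ambient $S^1$-action separately in Remark~\ref{comparison of actions}.
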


\begin{remark}[Comparison of circle actions]\label{comparison of actions}
In Corollary~\ref{steinberg general monodromy}, we use an $\BS$-action, and in particular an $S^1$-action, on 
 $\St_{\alpha,
   w\alpha}/G$ coming from its identification as a unipotent loop
 space
  $$
\St_{\alpha, w\alpha}/G\simeq   \cL^u (B(\alpha)\bs G(\alpha)/B(\alpha)).
  $$
On the other hand, by Theorem~\ref{steinberg variety as loop space}, we have a canonical
identification
$$
\cL (B\bs G/B) \simeq \St/G,
$$
and it is evident that the rotational $S^1$-action on the loop space of
the left hand side restricts to an $S^1$-action on the substack
 $$
 \St_{\alpha, w\alpha}/G \subset \St/G.
 $$ 
To distinguish the two $S^1$-actions, we will write $\rho$ for the former action and $\rho_\alpha$ for the latter.

 Our aim here is to compare the equivariant localization with respect to $\rho$ described in Corollary~\ref{steinberg general monodromy} with that with respect to $\rho_\alpha$.
  We will see that the two $S^1$-actions are related in a simple way that leads to an equivalence of their equivariant localizations.  

  First, observe that there is a canonical $S^1$-action $\varphi_\alpha$ on the stack
$$
\xymatrix{
B(\alpha)\bs G(\alpha)/B(\alpha) \simeq {\rm B} B(\alpha) \times_ {{\rm B} G(\alpha)}  {\rm B} B(\alpha)
}
$$
given by the simultaneous universal action of the central element $\alpha$   on each of the factors of the fiber product.
From a classifying bundle point of view, $\varphi_\alpha$  acts on any $G(\alpha)$-bundle
by the central element $\alpha$, and likewise on  any $B(\alpha)$-bundle.

Now given any stack $X$ with an $S^1$-action $\tau$, its loop space $\cL X$ comes equipped with an $S^1\times S^1$-action determined on the factors by the usual loop rotation action $\rho$ and the action $\cL\tau$ on loops   induced by the original action $\tau$.
   Similarly, 
    the unipotent loop space $\cL^u X$ comes equipped with an $\BS\times S^1$-action determined on the factors by the given actions. (These considerations are very general: independent symmetries of stacks $D$ and $X$ provide canonically commuting symmetries of the mapping stack $\Map_{\DSt}(D, X)$.)

Applying this to 
the canonical $S^1$-action $\varphi_\alpha$  on $B(\alpha)\bs G(\alpha)/B(\alpha) $, we obtain an $\BS\times S^1$-action,
and in particular an $S^1\times S^1$-action
$\rho\times\cL\varphi_\alpha$,
on the unipotent loop space
 $$
   \xymatrix{
 \cL^u (B(\alpha)\bs G(\alpha)/B(\alpha))   \simeq \St_{\alpha, w\alpha}/G}.
  $$

Tracing through the various constructions,
we find that
the  diagonal $S^1$-action associated to the product $S^1\times S^1$-action $\rho \times \cL\varphi_\alpha$ is equivalent to the $S^1$-action $\rho_\alpha$. 
This can be viewed as a factorization
of the $S^1$-action $\rho_\alpha$ into its semisimple part $\cL\varphi_\alpha$ and unipotent part $\rho$.
To  perform equivariant localization with respect to $\rho_\alpha$,
   it suffices to first perform equivariant localization  with respect to $\rho$ then study the induced action by $\cL\varphi_\alpha$.
   More precisely, we claim that the induced action by $\cL\varphi_\alpha$ is canonically trivialized.

   To see the  claim, recall that Corollary~\ref{steinberg general monodromy}  identifies the graded equivariant localization with respect to $\rho$ with coherent $\D$-modules  
   $$
\Coh_{[B(\alpha)\bs
G(\alpha)/B(\alpha)]}(\St_{\alpha, w\alpha}/G)^{\BS}_{loc} \simeq \D_{\coh}(B(\alpha)\bs
G(\alpha)/B(\alpha)).
$$
By definition, the $S^1$-action $\varphi_\alpha$ on the stack $B(\alpha)\bs
G(\alpha)/B(\alpha)$ is given by the central element  $\alpha$.  
By the functoriality of previous constructions, 
the induced action by $\cL\varphi_\alpha$ coincides with the induced action of $\varphi_\alpha$ on $\D$-modules.
Therefore it  can be canonically trivialized:
the
automorphism groups of a stack act homotopically on
$\D$-modules; and a fixed choice of logarithm $\lambda = \log(\alpha)$ determines a path $\exp(t\lambda)$ from the identity $e\in G(\alpha)$
to the central element $\alpha$. 

Finally, for a more symmetric statement, one could forget the grading of the above $\BS$-equivariance and only remember the underlying $S^1$-equivariance with respect to $\rho$. (Note that the $S^1$-action $\rho_\alpha$ does not
descend to $\Aff(S^1)$ or further extend to $\BS$ since it has a nontrivial semisimple part.) Then the above argument provides a (2-periodic)
equivalence
of the respective equivariant localizations.
   \end{remark}


\section{Langlands parameters as loop spaces}\label{Langlands section}


Motivated by an ongoing project to better understand representations
of real groups, we were led to a Galois-twisted version of the
relationship between loop spaces and Steinberg varieties developed in
the previous section.


\subsection{Involutions of flag varieties}

Let $\Gamma$ be a finite group.

A $\Gamma$-action on a
derived stack $Y$ is by definition a derived stack $\cY \to {\rm B}\Gamma$
equipped with an identification $\cY \times_{{\rm B}\Gamma} E\Gamma \simeq
Y$. We will usually write $Y/\Gamma$ in place of $\cY$. The
$\Gamma$-invariants (or $\Gamma$-fixed points) of $Y$ are by
definition the derived mapping stack of sections
$$ Y^\Gamma = \Map_{{\rm B}\Gamma}({\rm B}\Gamma, Y/\Gamma).
$$ (Note that the above discussion applies to $\Gamma$ an
arbitrary group derived stack.)

We will encounter the situation of two different $\Gamma$-actions on
single $Y$. To avoid confusion, we will consider two copies $\Gamma$
and $\Gamma'$ of the group indexed by the two actions, and write
$Y^\Gamma$ and $Y^{\Gamma'}$ to denote the invariants of the
respective actions.

Let us specialize to $\Gamma$ the cyclic group $\Z/2\Z$.

Consider the $\Gamma$-action on $G$ and in turn ${\rm B}G$ induced by an
involution $\eta$ of $G$. Note that the classifying space construction
and group invariants do not necessarily commute (the former is a
colimit and the latter is a limit). There is always a map $B(G^\Gamma)
\to ({\rm B}G)^\Gamma$, but in general $({\rm B}G)^\Gamma$ is a far richer object.

\begin{thm} \label{fixed points on BG}
There is a finite set $I$ of involutions of $G$ containing $\eta$ and a natural identification
$$
({\rm B}G)^\Gamma \simeq \coprod_{\iota\in I} BK_\iota
$$
where $K_\iota\subset G$ denotes the fixed-point subgroup of the involution $\iota$.
\end{thm}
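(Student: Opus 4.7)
The plan is to unpack the definition of $(BG)^\Gamma$ directly as the groupoid of sections of $BG/\Gamma \to B\Gamma$, and then parametrize its points by explicit $\eta$-twisted group-theoretic data. First I would observe that since $\Gamma$ acts on $G$ by the involution $\eta$, the quotient stack $BG/\Gamma$ is canonically $B(G\rtimes\Gamma)$ with its projection to $B\Gamma$, so
$$
(BG)^\Gamma = \Map_{B\Gamma}(B\Gamma, B(G\rtimes\Gamma))
$$
is the stack of $G$-conjugacy classes of group-theoretic splittings $\Gamma \to G\rtimes\Gamma$ of the projection $G\rtimes\Gamma\to\Gamma$. Because $\Gamma$ is finite (and $BG$ is smooth), this mapping stack has no hidden derived structure and can be computed classically.

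Next I would unwind such a splitting: it sends the generator $\sigma\in\Gamma$ to a pair $(h,\sigma)\in G\rtimes\Gamma$, and the relation $\sigma^2=1$ forces $(h,\sigma)^2=(h\eta(h),1)=(e,1)$. Thus the underlying set of $(BG)^\Gamma$ is
$$
\{h\in G \mid h\eta(h)=e\}/\sim,
$$
where the equivalence is given by $G$-conjugation inside $G\rtimes\Gamma$, which a short computation shows is the $\eta$-twisted conjugation $g\cdot h = ghg^{-1}\cdot\eta(gg^{-1})^{-1}$, i.e.\ $g\cdot h = gh\eta(g)^{-1}$. In particular $h=e$ is a valid point, and corresponds to the tautological splitting.

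Next I would compute the automorphism groupoid at a point $h$. The stabilizer of $(h,\sigma)$ under $G$-conjugation in $G\rtimes\Gamma$ consists of $g\in G$ with $gh\eta(g)^{-1}=h$, equivalently $\eta(g)=h^{-1}gh$. Define an involution $\iota_h:G\to G$ by $\iota_h(g)=h\eta(g)h^{-1}$; the relation $h\eta(h)=e$ ensures $\iota_h^2=\id$, and the stabilizer equation is precisely $\iota_h(g)=g$. Thus the automorphism group is the fixed-point subgroup $K_{\iota_h}=G^{\iota_h}$, giving the desired decomposition with $I=\{\iota_h\}_{[h]}$. Note $\iota_e=\eta$, so $\eta\in I$.

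The final step, and the only nonformal one, is finiteness of $I$. By construction every $\iota_h=\operatorname{Ad}_h\circ\eta$ is in the same inner class as $\eta$, so $I$ injects into the set of $G$-conjugacy classes of involutions in the inner class of $\eta$; this is a classical finiteness, going back to Cartan's classification (and used in the form cited later in the paper as the set $\Theta(\theta)$ of real forms in an inner class). The main obstacle, such as it is, is just keeping the cocycle bookkeeping straight: getting the twisted conjugation action, the form of $\iota_h$, and the identification of stabilizers mutually consistent; everything else is formal manipulation with semidirect products.
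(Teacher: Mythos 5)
Your proposal is correct and follows essentially the same route as the paper: both pass to the semidirect product $G\rtimes\Gamma$, identify $(BG)^\Gamma$ with $G$-conjugacy classes of splittings $\Gamma\to G\rtimes\Gamma$, and reduce this to the twisted-conjugation quotient $\{h\in G \mid h\eta(h)=e\}/G$. The paper stops there, declaring $I$ to be the set of components and ``picking one $\iota$ from each class'' without spelling out the stabilizer computation or the finiteness; you fill in precisely those two points, explicitly identifying the automorphism group at $h$ as $G^{\iota_h}$ with $\iota_h=\operatorname{Ad}_h\circ\eta$ and invoking Cartan's finiteness of real forms in an inner class. (There is a small typo in your intermediate expression for the twisted action, but you state the correct formula $g\cdot h = gh\eta(g)^{-1}$ immediately after, and the rest of the computation is sound.)
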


\begin{proof}
Let $G\rtimes \Gamma$ be the semidirect product with respect to the given action.
Unwinding the definitions gives that $({\rm B}G)^\Gamma$ is equivalent to 
homomorphisms $\Gamma\to G\rtimes \Gamma$, up to conjugation, such that the composition
$\Gamma\to G\rtimes \Gamma \to \Gamma$ is the identity. 
Thus we have a natural identification
$$
({\rm B}G)^\Gamma \simeq \{ g\in G | g\eta(g) = 1\} /G
$$ We take $I$ to be the set of components of this stack -- the
set of conjugacy classes of solutions to the above equation -- and pick
one $\iota$ from each class.
\end{proof}

\begin{remark} 
We can view the quotient $\cG_{{\rm B}\Gamma} = G/\Gamma$ as a group scheme
over ${\rm B}\Gamma$ such that its base change along $E\Gamma \to {\rm B}\Gamma$
is identified with $G$. From this perspective, the invariants
$({\rm B}G)^\Gamma$ form the moduli of principal $\cG_{{\rm B}\Gamma}$-bundles
over ${\rm B}\Gamma$. Forgetting the invariant structure on such a bundle
corresponds to base change along $E\Gamma \to {\rm B}\Gamma$.
\end{remark}

Now consider the $\Gamma$-action on the fiber product
$$
\BGB \simeq {\rm B}B \times_{{\rm B}G} {\rm B}B\simeq G\backslash (G/B \times G/B)
$$ induced by the involution $\eta$ and the exchange of the two
factors.  As an immediate consequence of Theorem~\ref{fixed points on BG},
we can calculate the
$\Gamma$-fixed points in $\BGB$ in terms of quotients of $G/B$ by
symmetric subgroups.

\begin{corollary}\label{sym flags} There is a natural identification
$$ (\BGB)^\Gamma \simeq \coprod_{\iota\in I} K_\iota\backslash G/B.
$$
\end{corollary}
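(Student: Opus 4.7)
The plan is to reduce to Theorem~\ref{fixed points on BG} by analyzing what a $\Gamma$-fixed point of $\BGB$ consists of, and then to show that the extra data of the pair of $B$-reductions collapses to a single $B$-reduction once the $\Gamma$-action on the underlying $G$-bundle is fixed.

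First I would unwind the functor of points. A map $S\to \BGB$ is a $G$-bundle $P$ on $S$ with two $B$-reductions $P_1,P_2$. The $\Gamma$-action combines the involution $\eta$ on $G$ with the swap of the two factors of $BB\times_{BG}BB$. A $\Gamma$-fixed point is therefore the data $(P,P_1,P_2,\sigma)$, where $\sigma\colon \eta^*P\stackrel{\sim}{\to} P$ is an isomorphism satisfying the cocycle condition $\sigma\circ \eta^*\sigma=\id$, and such that $\sigma$ sends $\eta^*P_1$ to $P_2$ (so that automatically it also sends $\eta^*P_2$ to $P_1$, using the cocycle identity).

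Next, I would observe that the pair $(P,\sigma)$ is precisely a $\Gamma$-equivariant $G$-bundle in the sense of Theorem~\ref{fixed points on BG}, so it is classified by a map $S\to (BG)^\Gamma$. Applying that theorem, the moduli of such pairs decomposes as $\coprod_{\iota\in I} BK_\iota$, and over the component indexed by $\iota$ the data $(P,\sigma)$ corresponds to a principal $K_\iota$-bundle, with $P=G\times^{K_\iota}(\cdot)$ and $\sigma$ the canonical one.

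It then remains to analyze the $B$-reduction data over each component. Given $(P,\sigma)$, a first $B$-reduction $P_1$ is the same as a $K_\iota$-equivariant section of the associated $G/B$-bundle, i.e., a map $S\to K_\iota\backslash G/B$. Once $P_1$ is chosen freely, the compatibility with $\sigma$ forces $P_2:=\sigma(\eta^*P_1)$, and the cocycle condition $\sigma\circ \eta^*\sigma=\id$ automatically yields $\sigma(\eta^*P_2)=P_1$. Hence over each $\iota$, the space of fixed points is canonically $K_\iota\backslash G/B$, and assembling the components gives the stated identification.

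The main technical step to verify carefully is the compatibility between the swap of the two $B$-reductions and the $\eta$-twist on the $G$-bundle: one must check that the composite of the two involutions (swap and $\eta$) squares to the identity and that $P_2$ is genuinely determined by $P_1$ rather than being an independent degree of freedom. This is where the cocycle identity $\sigma\circ \eta^*\sigma=\id$ plays its essential role; everything else is a bookkeeping exercise that follows formally from Theorem~\ref{fixed points on BG}.
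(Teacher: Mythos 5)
Your proof is correct and matches the paper's (implicit) argument: the paper asserts Corollary~\ref{sym flags} as an immediate consequence of Theorem~\ref{fixed points on BG}, and in a following remark notes that the identification goes through the intermediate realization $(\BGB)^\Gamma \simeq (BG)^\Gamma \times_{BG} BB$, which is exactly what your functor-of-points unwinding establishes — the pair $(P,\sigma)$ gives the $(BG)^\Gamma$ factor, $P_1$ gives the $BB$ factor, and the observation that $P_2=\sigma(\eta^*P_1)$ is forced (with the second compatibility $\sigma(\eta^*P_2)=P_1$ following from the cocycle identity) shows this factorization is an equivalence. Applying Theorem~\ref{fixed points on BG} and identifying $BK_\iota\times_{BG}BB\simeq K_\iota\backslash G/B$ then completes the proof exactly as you wrote.
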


\begin{remark} 
Recall that we can view $\BGB \simeq {\rm B}B \times_{{\rm B}G} {\rm B}B$ as the moduli of $G$-local systems on the interval $[0,1]$ with $B$-reductions
at the end points $\{0,1\}$. 
From this perspective, 
 flipping $[0,1]$
around the midpoint $1/2 \in [0,1]$ induces
the involution that exchanges the two factors of the fiber product.

Furthermore, the involution $\eta$ descends the group $G$ to a group
scheme $ \cG_{[0, 1/2\rrbracket} = (G \times [0,1])/\Gamma$ over the
  quotient $[0,1/2\rrbracket = [0,1]/\Gamma$.  Finally, the invariants
    $(\BGB)^\Gamma$ form the moduli of $\cG_{[0, 1/2\rrbracket}$-local
      systems on $[0,1/2\rrbracket$ with $B$-reduction at the boundary
        point $\{0\}$.

As our notation suggests, we often regard $[0, 1/2\rrbracket$ as the
  interval $[0,1/2]$ with the end point $\{1/2\}$ equipped with the
  symmetries $\Gamma$. This makes apparent the identification of
  Corollary \ref{sym flags} via the intermediate realization
  $(\BGB)^\Gamma \simeq ({\rm B}G)^\Gamma \times_{{\rm B}G} {\rm B}B$.
\end{remark}


\subsection{Involutions of Steinberg varieties}

We continue to let $\Gamma$ denote the cyclic group $\Z/2\Z$. 


\begin{defn}
The {\em Langlands parameter variety} $\La^\eta$ is defined to be
$$
\La^\eta = \{ (g, B) \in G\times \cB | g\eta(g) \in B\}.
$$
\end{defn}

Consider the involution of the Grothendieck-Steinberg variety
$$
\St/G \simeq \cL({\rm B}B) \times_{\cL({\rm B}G)} \cL({\rm B}B)
$$
induced by the involution $\eta$ of ${\rm B}G$, the exchange of the two factors of $\cL({\rm B}B)$, and the antipodal map on $S^1$.
To reflect the fact that we are twisting by the antipodal map on $S^1$, we will write $\Gamma_{tw}$ for the resulting $\Z/2\Z$-action on $\St$.

\begin{thm} There is a natural identification
$$
\La^\eta/G \simeq (\St/G)^{\Gamma_{tw}}\simeq (\cL(\BGB))^{\Gamma_{tw}}.
$$
\end{thm}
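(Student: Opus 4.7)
The plan is to recognise both sides as the moduli of $\eta$-twisted $G$-local systems on the M\"obius strip $M$, equipped with a Borel reduction along the boundary circle $\partial M$. Starting from Theorem~\ref{steinberg variety as loop space}, which identifies $\St/G\simeq\cL(\BGB)$, and using the presentation $\BGB\simeq BB\times_{BG}BB$ as the moduli of $G$-bundles on $[0,1]$ with $B$-reductions at $\{0,1\}$, I would realise $\cL(\BGB)$ as the moduli of $G$-local systems on the cylinder $C=[0,1]\times S^1$ together with $B$-reductions on its two boundary circles $\{0,1\}\times S^1$. The three ingredients of $\Gamma_{tw}$ then translate into geometric symmetries of this cylinder moduli: the swap of $\cL(BB)$-factors is the flip $t\mapsto 1-t$, the antipodal map on $S^1$ is the free rotation by $\pi$, and $\eta$ acts on the coefficient group $G$. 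The two spatial symmetries combine into the free involution $\sigma\colon(t,\theta)\mapsto(1-t,\theta+\pi)$ of $C$, whose quotient is $M$; the projection $C\to M$ realises $C$ as the orientation double cover of $M$, with the two boundary circles of $C$ projecting onto the single boundary circle $\partial M$.

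The central step is the homotopy-theoretic identification of $(\cL(\BGB))^{\Gamma_{tw}}$ with the moduli of $\cG_M$-local systems on $M$ equipped with a $B$-reduction along $\partial M$, where $\cG_M:=(G\times C)/\Z/2$ denotes the group scheme on $M$ obtained by descending $G$ along $\sigma\times\eta$. This follows from the universal property of homotopy fixed points together with the freeness of $\sigma$ on $C$: a $\Gamma_{tw}$-equivariant object of $\cL(\BGB)$, namely a $(\sigma\times\eta)$-equivariant $G$-local system on $C$ with boundary $B$-reductions intertwined by the symmetry, descends along the free quotient $C\to M$ to a $\cG_M$-local system on $M$ together with a single $B$-reduction on $\partial M$, and this descent is an equivalence.

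Finally, I would compute the latter moduli by an elementary $\pi_1$-calculation. The group $\pi_1(M)\simeq\Z$ is generated by the orientation-reversing core circle, so an $\eta$-twisted $G$-local system on $M$ is determined by the monodromy around the core, an element $(g,1)$ in the non-identity component of $G\rtimes\Z/2$, i.e.\ by a single element $g\in G$. The boundary $\partial M$ double-covers the core, so its monodromy is $(g,1)^2=(g\eta(g),0)$, and a $B$-reduction along $\partial M$ is exactly the datum of a Borel $B\in\cB$ containing $g\eta(g)$. The residual $G$-gauge action, obtained from conjugation inside $G\rtimes\Z/2$ at the base point, is the twisted conjugation $y\cdot(g,B)=(yg\eta(y)^{-1},yBy^{-1})$ from the paper's definition of $\La/G$, so the moduli in question is exactly $\La/G$, yielding the claimed equivalence.

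The hard part will be the descent statement in the middle step: rigorously establishing, in the derived-algebraic setting, that $\Gamma_{tw}$-homotopy-fixed points of $\cL(\BGB)$ are computed by a mapping stack out of the orbifold quotient $M=C/\sigma$ with $\cG_M$-twisted target and $B$-reduction along $\partial M$. Once this descent is granted, the remaining content of the proof is the standard dictionary between surface local systems and fundamental-group representations, and the final identification with $\La/G$ follows directly from the formulas above.
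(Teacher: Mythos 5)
Your proof is correct and follows essentially the same route as the paper: both identify $\cL(\BGB)$ with $G$-local systems on the cylinder with $B$-reductions along the two boundary circles, interpret the three ingredients of $\Gamma_{tw}$ as the free involution $\sigma(t,\theta)=(1-t,\theta+\pi)$ together with $\eta$, and then descend along the double cover cylinder $\to$ M\"obius strip to $\cG_M$-local systems on $M$ with a single boundary $B$-reduction. The paper leaves the descent (and the concluding $\pi_1$-computation that pins down $\La/G$) implicit as ``a straightforward descent assertion,'' whereas you correctly flag the descent step as the place where care is needed and then spell out the fundamental-group bookkeeping; this is a useful expansion of the same argument rather than a different one.
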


\begin{proof}
This is a straightforward descent assertion from $G$-local systems on a cylinder with $B$-reductions along the two boundary circles to $\eta$-twisted $G$-local systems on the M\"obius strip with $B$-reduction
along the single boundary circle.
\end{proof}

\begin{remark}
Observe that $\cL/G$ is equipped with a canonical $S^1$-action such that the identification of the above theorem is $S^1$-equivariant.
\end{remark}

\begin{remark} 
Recall that we can view the equivariant Grothendieck-Steinberg  variety
$
\St/G\simeq \cL (B\bs G/B) 
$
as the moduli of $G$-local systems on the cylinder $Cyl = [0,1] \times S^1$ with $B$-reductions at 
the boundary circles $\{0,1\} \times S^1$. 
From this perspective, the above involution of the cylinder $Cyl$ has quotient
the M\"obius strip  $Moeb = ([0,1] \times S^1)/\Gamma_{tw}$.

Furthermore, the involution $\eta$ descends the group $G$ to a group scheme $\cG_{Moeb} = (G \times Cyl)/\Gamma_{tw}$ over  the M\"obius strip  $Moeb$.
We conclude that the invariants 
$
\La^\eta/G \simeq (\St/G)^{\Gamma_{tw}}
$ form the moduli of $\cG_{Moeb}$-local systems on $Moeb$ with $B$-reduction along the boundary circle $\{0\}\times S^1$.
\end{remark}


\subsection{Fixed monodromy}

We have the ordered eigenvalue map 
$$
\xymatrix{
\tilde \chi: \La^\eta \ar[r] & H
&
\tilde\chi (g, B) = [g\eta(g)]\in B/U \simeq H.
}
$$

For $\alpha\in H$, let $\wh H_\alpha$ denote its formal neighborhood.

\begin{defn}
The {\em monodromic Langlands parameter variety} $\La^\eta_\alpha$ is defined to be
$$
\La^\eta_\alpha =  \tilde\chi^{-1}(\wh H_\alpha). 
$$
\end{defn}

Note that $g\eta(g)$ and $\eta(g\eta(g))= \eta(g) g$ are conjugate by $g$. Thus $\alpha = [g\eta(g)]$
and $\eta(\alpha)$ are conjugate by $w\in W$. 

Consider the $\Gamma_{tw}$-action on 
$$
\St_{\alpha, \eta(\alpha)}/G\subset \St/G
$$
obtained by restriction.

\begin{lemma} \label{monod vs fixed} Fixing monodromy commutes with taking $\Gamma_{tw}$-invariants:
$$
\La^\eta_\alpha/G \simeq (\St_{\alpha, \eta(\alpha)}/G)^{\Gamma_{tw}}.
$$
\end{lemma}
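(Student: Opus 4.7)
The plan is to deduce the lemma from the immediately preceding theorem $\La/G\simeq (\St/G)^{\Gamma_{tw}}$ by checking that the condition cutting out $\La_\alpha$ is the $\Gamma_{tw}$-invariant image of the condition cutting out $\St_{\alpha,\eta(\alpha)}$. Concretely, the key step is to identify the ordered eigenvalue map $\tilde\chi_\St:\St/G\to \cH\subset H\times H$ as a $\Gamma_{tw}$-equivariant map for an appropriate involution $\sigma$ on $H\times H$, and to check that on $\Gamma_{tw}$-invariants it recovers $\tilde\chi_\La$.

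First, I would compute the induced involution $\sigma$ on $H\times H$. The $\Gamma_{tw}$-action on $\St$ combines the involution $\eta$ of $G$ with the exchange of the two factors of $\cL(BB)$ (equivalently the swap of the two boundary circles of the cylinder underlying $\cL(B\bs G/B)\simeq\St/G$). Passing through the universal Cartan identifications, this descends on the base to the involution $\sigma:(\beta_1,\beta_2)\mapsto(\eta_H(\beta_2),\eta_H(\beta_1))$, where $\eta_H$ is the automorphism of $H$ induced by $\eta$. Since $\eta_H^2=\mathrm{id}$, the point $(\alpha,\eta(\alpha))$ is a fixed point of $\sigma$, and hence the formal neighborhood $\wh\cH_{\alpha,\eta(\alpha)}$ is $\sigma$-stable. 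Consequently $\St_{\alpha,\eta(\alpha)}=\tilde\chi_\St^{-1}(\wh\cH_{\alpha,\eta(\alpha)})$ is a $\Gamma_{tw}$-stable substack of $\St$, so taking $\Gamma_{tw}$-invariants of $\St_{\alpha,\eta(\alpha)}/G$ makes sense.

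Second, I would identify the invariants $\wh\cH_{\alpha,\eta(\alpha)}^\sigma$ with the formal neighborhood $\wh H_\alpha\subset H$ via projection to the first factor, and verify that under this identification the restriction of $\tilde\chi_\St$ to $(\St/G)^{\Gamma_{tw}}\simeq\La/G$ is precisely $\tilde\chi_\La$. This is the content of the descent from the cylinder to the M\"obius strip underlying the preceding theorem: a $\Gamma_{tw}$-fixed point of $\St/G$ corresponding to a Langlands parameter $(g,B)\in\La$ has its first cylinder-monodromy class equal to $[g\eta(g)]\in B/U\simeq H$, since the boundary monodromy on the M\"obius strip is the square of the twisted core monodromy. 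This is by definition $\tilde\chi_\La(g,B)$.

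Granting the previous two steps, the lemma follows by commuting limits: the formation of invariants under $\Gamma_{tw}$ and the pullback of the formal neighborhood $\wh H_\alpha\subset H$ along $\tilde\chi_\La$ are both limit constructions, so
\[
\La_\alpha/G=\tilde\chi_\La^{-1}(\wh H_\alpha)\simeq \bigl(\tilde\chi_\St^{-1}(\wh\cH_{\alpha,\eta(\alpha)})/G\bigr)^{\Gamma_{tw}}=(\St_{\alpha,\eta(\alpha)}/G)^{\Gamma_{tw}}.
\]
The main obstacle is the bookkeeping in the first step: one must keep careful track of how the exchange-of-factors combined with the $\eta$-twist acts on the two Borels $B_1$, $B_2$ and their universal Cartan identifications, in order to correctly determine $\sigma$ and confirm that $(\alpha,\eta(\alpha))$ (rather than some Weyl translate) is the relevant fixed point. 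Once $\sigma$ has been pinned down, everything else is formal.
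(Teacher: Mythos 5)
Your proof is correct and is essentially the paper's own argument spelled out: the paper dispatches the lemma with ``Immediate from definitions,'' and what you supply is precisely the definitional unwinding — identifying the induced involution $\sigma(\beta_1,\beta_2)=(\eta_H(\beta_2),\eta_H(\beta_1))$ on $H\times H$, checking that $(\alpha,\eta(\alpha))$ is $\sigma$-fixed so $\St_{\alpha,\eta(\alpha)}$ is $\Gamma_{tw}$-stable, noting that the first component of $\tilde\chi_\St$ restricted to fixed points matches $\tilde\chi_\La$, and then commuting the two limits (fixed points and formal completion).
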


\begin{proof}
Immediate from definitions.
\end{proof}

Now we arrive at the main result explaining the relation between Langlands parameters and loop spaces. It is an immediate consequence of our previous results and the following general observation.

Let $Y$ be a derived stack with $\Gamma$-action, and consider the
induced $\Gamma$ action on $\cL Y$. Let $\Gamma_{tw}$ denote the
$\Gamma$ action on $\cL Y$ obtained by twisting the induced action by
the antipodal map on $S^1$.

\begin{lemma}\label{key lemma} There are natural identifications
$$
(\cL^u Y)^\Gamma \simeq \cL^u (Y^\Gamma)
\qquad
(\cL^u Y)^\Gamma \simeq (\cL^u Y)^{\Gamma_{tw}}
$$
\end{lemma}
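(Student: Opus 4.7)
The first identification, $\cL^u(Y^\Gamma) \simeq (\cL^u Y)^\Gamma$, should be a routine commutation of limits. Both constructions are presented as mapping stacks --- $\cL^u(-) = \Map_{\DSt}(\Aff(S^1), -) = \Map_{\DSt}(B\Ga, -)$ on the one hand, and $(-)^\Gamma = \Map_{B\Gamma}(B\Gamma, -/\Gamma)$ on the other --- so they commute with each other in the derived (and $\infty$-categorical) setting. The plan is to compute directly
\[
\cL^u(Y^\Gamma) = \Map(B\Ga, \Map_{B\Gamma}(B\Gamma, Y/\Gamma)) \simeq \Map_{B\Gamma}(B\Gamma, \Map(B\Ga, Y/\Gamma)) = (\cL^u Y)^\Gamma,
\]
where the exchange uses that $B\Ga$ carries the trivial $\Gamma$-action (since the $\Gamma$-action lives only on $Y$).

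For the second identification, $(\cL^u Y)^\Gamma \simeq (\cL^u Y)^{\Gamma_{tw}}$, the two $\Gamma$-actions on $\cL^u Y$ differ precisely by pullback along the antipodal automorphism of $\Aff(S^1) = B\Ga$, which is induced by the involution $-1 \colon \Ga \to \Ga$. The key input is the canonical $\Gm$-action on $\cL^u Y$ recalled in Section~\ref{section: background}, which comes from the scaling action of $\Gm$ on $\Ga$ and for which the element $-1 \in \Gm$ acts as exactly the antipodal involution. My plan is to package the difference between the two actions as the homomorphism $\Gamma \hookrightarrow \Gm \to \Aut(\cL^u Y)$ sending $1 \mapsto -1$, and then to trivialize this twist using the $\Gm$-equivariant structure on $\cL^u Y$. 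Equivalently, after passing to the formal loop space via $\Lhat Y \simeq \wh\BT_Y[-1]$, the antipodal twist is realized as the tautological scaling by $-1$ on a shifted object, where it can be canonically trivialized by exploiting the $\Gm$-coherence of the shifted tangent bundle.

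The main obstacle will be making precise the trivialization of the $-1 \in \Gm$ twist at the level of $\infty$-categorical equivariant data. As a scheme, $\Gm$ admits no algebraic path from $1$ to $-1$, so the equivalence cannot come from a naive homotopy; instead the plan is to extract it from the full coherence data of the $\Gm$-action, viewed as the symmetry of $B\Ga$ by scaling. Once this is set up carefully, the identification of fixed-point stacks should follow formally, and the two equivalences of the lemma together imply the descent of unipotent loops through $\Gamma_{tw}$-invariants used in the proof of the main identification $\La/G \simeq (\cL(\BGB))^{\Gamma_{tw}}$.
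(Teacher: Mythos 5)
The first identification is handled essentially as the paper does it (``obvious from standard adjunctions''), and your commutation-of-mapping-stacks computation is fine.

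The second identification, however, rests on a misidentification that derails the argument. You claim the antipodal involution of $S^1$ induces the automorphism $-1\colon \Ga\to\Ga$ of $\Aff(S^1)=B\Ga$, i.e.\ the action of $-1\in\Gm$. This is not so. The antipodal map $z\mapsto -z$ is rotation by $\pi$: it is orientation-preserving, degree~$+1$, and homotopic to the identity, so on $H^*(S^1,k)\simeq k[\eta]/(\eta^2)$ it is the identity, not $\eta\mapsto -\eta$. (You appear to be thinking of the reflection/complex-conjugation involution $z\mapsto \bar z$, which \emph{would} give $-1\in\Gm$, but that is not the involution used to form the M\"obius strip.) Consequently there is no $-1\in\Gm$ twist to trivialize; the induced involution of $\Aff(S^1)$ is already the identity. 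The genuinely subtle point --- the one the paper actually addresses --- is not to trivialize an action of $-1$ but to show that the antipodal $\Z/2$-\emph{action} on $\Aff(S^1)$ is trivial as a coherent action, not merely objectwise. The paper's mechanism is clean: the squaring map $sqr\colon S^1\to S^1$ is the quotient by the antipodal involution, $sqr^*$ multiplies the degree-one generator of $\cO(S^1)$ by $2$, and since $2$ is invertible in the $\Q$-algebra $k$, $\Aff(sqr)$ is an equivalence; equivariance of $sqr$ for the trivial action downstairs then forces the antipodal action on $\Aff(S^1)$ to be trivial. Your proposed route --- extracting a trivialization ``from the full coherence data of the $\Gm$-action'' --- is not made precise, and you yourself flag that $\Gm$ admits no algebraic path from $1$ to $-1$; even if the identification with $-1\in\Gm$ were correct, this would remain a real gap rather than a technicality.
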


\begin{proof}
The first assertion is obvious from standard adjunctions.

For the second assertion, by standard adjunctions, we have equivalences
$$
(\cL^u Y)^\Gamma \simeq \Map_{\Gamma}(E\Gamma, \cL^u Y) \simeq 
\Map_{\Gamma}(E\Gamma, \Map({\rm B}\Ga, Y)) \simeq 
\Map_{\Gamma}({\rm B}\Ga, Y)
$$
and similarly for $\Gamma_{tw}$.
But the antipodal map on $S^1$ has degree $1$, hence induces the identity on $\cO(S^1) \simeq H^*(S^1, k)$.
Thus the induced map on the affinization $B\Ga \simeq \Aff(S^1) = \Spec \cO(S^1)$ is an equivalence.
\end{proof}

\begin{thm}\label{main result}
There is a natural identification
$$
\La^\eta_\alpha/G \simeq \cL^u((B(\alpha) \backslash G(\alpha) /B(\alpha))^\Gamma)
$$
\end{thm}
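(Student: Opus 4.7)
The strategy is to assemble the theorem from the identifications already established in the preceding subsections. Beginning from $\La_\alpha/G$, Lemma \ref{monod vs fixed} gives
$$\La_\alpha/G \simeq (\St_{\alpha,\eta(\alpha)}/G)^{\Gamma_{tw}}.$$
Since $g\eta(g)$ and $\eta(g\eta(g))=\eta(g)g$ are conjugate, the elements $\alpha$ and $\eta(\alpha)$ lie in the same $W$-orbit, so $\eta(\alpha) = w\cdot\alpha$ for some $w\in W$. Theorem \ref{monodromic steinberg variety as loop space} then furnishes a natural identification
$$\St_{\alpha,\eta(\alpha)}/G \simeq \cL^u(B(\alpha)\bs G(\alpha)/B(\alpha)),$$
and substituting this into the previous display places a transported $\Gamma_{tw}$-action on the unipotent loop space of $B(\alpha)\bs G(\alpha)/B(\alpha)$.

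The main step is to identify this transported action. The plan is to check that, after pulling back along the chain of identifications used in Theorem \ref{monodromic steinberg variety as loop space} (in particular the translation by $\tilde\alpha\in G(\alpha)$), the $\Gamma_{tw}$-action on $\St_{\alpha,\eta(\alpha)}/G$ becomes the antipodal twist on $S^1$ together with a genuine $\Gamma$-action on $B(\alpha)\bs G(\alpha)/B(\alpha)$ itself. That $\Gamma$-action is produced by combining the swap of the two Borel factors with an involution of $G(\alpha)$ obtained from $\eta$: since $\eta$ a priori carries $G(\alpha)$ to $G(\eta(\alpha))$, one has to compose with conjugation by a carefully chosen lift of $w^{-1}$ (matched to the shift by $\tilde\alpha$) so that the resulting automorphism preserves $G(\alpha)$ and its pair of Borels, and squares to the identity on the stack. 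This bookkeeping is the delicate point of the argument; everything else is formal.

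Once the action has been identified as the unipotent-loops version of an honest $\Gamma$-action twisted by the antipodal map on $S^1$, Lemma \ref{key lemma} finishes the proof in two moves: first, $(\cL^u Y)^{\Gamma_{tw}} \simeq (\cL^u Y)^{\Gamma}$ because the antipodal involution of $S^1$ induces the identity on its affinization $B\Ga$; second, $(\cL^u Y)^{\Gamma} \simeq \cL^u(Y^{\Gamma})$ by the adjunction that loops commute with $\Gamma$-invariants. Applied with $Y = B(\alpha)\bs G(\alpha)/B(\alpha)$ and chained with the earlier identifications, this yields the asserted equivalence
$$\La_\alpha/G \simeq \cL^u\bigl((B(\alpha)\bs G(\alpha)/B(\alpha))^{\Gamma}\bigr).$$
The main obstacle, as noted, is ensuring that the $\Gamma_{tw}$-action on the Steinberg side corresponds under Theorem \ref{monodromic steinberg variety as loop space} to a legitimate $\Gamma$-action on the flag-type quotient for $G(\alpha)$, rather than to some more exotic twisted structure that would fall outside the scope of Lemma \ref{key lemma}.
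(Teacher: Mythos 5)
Your proposal follows exactly the paper's chain of identifications: Lemma~\ref{monod vs fixed}, then Theorem~\ref{monodromic steinberg variety as loop space}, then the two moves of Lemma~\ref{key lemma}. The only divergence is that the paper's proof presents this as immediate, whereas you correctly flag the point that the paper leaves implicit: one must verify that the $\Gamma_{tw}$-action transported under Theorem~\ref{monodromic steinberg variety as loop space} really arises from an honest $\Gamma$-action on $B(\alpha)\bs G(\alpha)/B(\alpha)$ (an $\eta$-derived involution of $G(\alpha)$, corrected by conjugation to preserve $G(\alpha)$, combined with the factor swap) twisted by the antipodal map on $S^1$, so that Lemma~\ref{key lemma} is actually applicable; your bookkeeping of this step is sound.
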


\begin{proof}
Follows immediately from  Theorem~\ref{monodromic steinberg variety as loop space}, Lemma~\ref{monod vs fixed}, and Lemma~\ref{key lemma}:
$$
\begin{array}{ccl}
\La^\eta_\alpha/G 
& \simeq & (\St_{\alpha, \eta(\alpha)}/G)^{\Gamma_{tw}}\\
& \simeq &  (\cL^u (B(\alpha)\bs G(\alpha)/B(\alpha)))^{\Gamma_{tw}}\\
& \simeq &  (\cL^u (B(\alpha)\bs G(\alpha)/B(\alpha)))^{\Gamma}\\
& \simeq & \cL^u ((B(\alpha)\bs G(\alpha)/B(\alpha))^\Gamma).
\end{array}
$$
\end{proof}

\begin{corollary}\label{final result} There is a natural identification
$$
\La^\eta_\alpha/G \simeq \coprod_{\iota\in I} \cL^u(K(\alpha)_ {\iota}\backslash G(\alpha)/B(\alpha)).
$$
\end{corollary}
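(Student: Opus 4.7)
The plan is to combine Theorem~\ref{main result} with Corollary~\ref{sym flags}, applied to the reductive subgroup $G(\alpha)$ in place of $G$. Theorem~\ref{main result} already supplies
$$
\La_\alpha/G \simeq \cL^u((B(\alpha)\bs G(\alpha)/B(\alpha))^\Gamma),
$$
so the content of the corollary reduces to decomposing the $\Gamma$-fixed points of this double quotient and commuting $\cL^u$ past the resulting disjoint union.

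First I would unwind the $\Gamma$-action that appears in Theorem~\ref{main result}. Tracing the proof of that theorem, the equivalence $\St_{\alpha,\eta(\alpha)}/G\simeq \cL^u(B(\alpha)\bs G(\alpha)/B(\alpha))$ of Theorem~\ref{monodromic steinberg variety as loop space} together with Lemma~\ref{key lemma} transports the $\Gamma_{tw}$-action on the monodromic Steinberg variety to a $\Gamma$-action on $B(\alpha)\bs G(\alpha)/B(\alpha)\simeq BB(\alpha)\times_{BG(\alpha)}BB(\alpha)$ that combines an honest involution of $G(\alpha)$ with the exchange of the two factors. Concretely, after choosing a lift $\sigma\in G$ with $\sigma\eta(\sigma)\in\wh H_\alpha$, the twisted involution $h\mapsto \sigma\eta(h)\sigma^{-1}$ preserves the centralizer $G(\alpha)$ and can be arranged to preserve a chosen Borel $B(\alpha)$. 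This puts us in exactly the setup of Corollary~\ref{sym flags}.

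Applying Corollary~\ref{sym flags} to the pair $(G(\alpha),B(\alpha))$ equipped with this involution then yields
$$
(B(\alpha)\bs G(\alpha)/B(\alpha))^\Gamma \;\simeq\; \coprod_{\iota\in I} K(\alpha)_\iota\bs G(\alpha)/B(\alpha),
$$
where $I$ is the finite set of involutions of $G(\alpha)$ produced by Theorem~\ref{fixed points on BG}. Because $\Aff(S^1)\simeq B\Ga$ is connected, the functor $\cL^u(-) = \Map_{\DSt}(\Aff(S^1), -)$ commutes with finite coproducts of stacks, so one may pass the unipotent loop space through the disjoint union. Chaining these equivalences produces the claimed identification.

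The main obstacle, in my view, is the first step: pinning down the involution of $G(\alpha)$ and checking that the $\Gamma_{tw}$-action on $\St_{\alpha,\eta(\alpha)}/G$ really does descend, under the identification of Theorem~\ref{monodromic steinberg variety as loop space}, to the Corollary~\ref{sym flags}-style action on the smaller double quotient. Everything after that is a formal combination of results already in place.
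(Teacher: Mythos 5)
Your proof follows exactly the paper's own argument: Theorem~\ref{main result} gives $\La_\alpha/G \simeq \cL^u((B(\alpha)\bs G(\alpha)/B(\alpha))^\Gamma)$, Corollary~\ref{sym flags} (applied to $G(\alpha)$) decomposes the $\Gamma$-fixed points, and $\cL^u$ is passed through the finite coproduct. The paper states the corollary as an immediate combination of these two results; you have supplied some extra (and reasonable) detail on why the transported $\Gamma$-action on $B(\alpha)\bs G(\alpha)/B(\alpha)$ is of the form needed for Corollary~\ref{sym flags}, but the route is the same.
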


\begin{proof}
Follows immediately from Corollary~\ref{sym flags}
and Theorem~\ref{main result}:
$$
\La^\eta_\alpha/G \simeq 
 \cL^u((B(\alpha) \backslash G(\alpha) /B(\alpha))^\Gamma)
\simeq \coprod_{\iota\in I} \cL^u(K(\alpha)_{\iota}\backslash G(\alpha)/B(\alpha)).
$$
\end{proof}

\begin{remark} 
For simplicity, let us focus on the assertion of Theorem~\ref{main result} when $\alpha$ is the identity $e\in H$.
Outside of relatively simple deductions, the argument boils down to the identification
$$
(\cL^u(B \backslash G /B))^\Gamma \simeq (\cL^u(B \backslash G /B))^{\Gamma_{tw}}.
$$
We can view this as an equivalence between twisted {\em unipotent} local systems on the product 
$[0,1/2\rrbracket \times S^1 = ([0,1]/\Gamma) \times S^1$ and on the M\"obius strip $Moeb = ([0,1]\times S^1)/\Gamma_{tw}$. Such an equivalence would not be true for all local systems, but 
Lemma~\ref{key lemma} explains that the twist by the antipodal map of $S^1$ is innocuous when it comes to unipotent local systems.

Finally, we can view Corollary~\ref{final result} as a natural consequence of this interplay between
 the product 
$[0,1/2\rrbracket \times S^1$ and the M\"obius strip $Moeb$. Namely, we have seen that the left hand side can be naturally interpreted as twisted unipotent local systems on $Moeb$, while the right hand side can be naturally interpreted as twisted unipotent local systems on $[0,1/2\rrbracket \times S^1$.
 \end{remark}

\subsection{Application to Langlands parameters}


Let us focus on the most interesting case of trivial monodromy when $\alpha$ is the identity $e\in H$. 
By Corollary~\ref{final result},
 we have a canonical identification
 $$
\La^\eta_e/G \simeq \coprod_{\iota\in I} \cL^u(K_ {\iota}\backslash G/B).
$$
  where the monodromic Langlands parameter variety $\La^\eta_{e}$ consists of the data
  $$
\La^\eta_{e} = \{(g, B) \in G\times \cB\, |\, g\eta(g) \in G^u \cap B\},
$$
where $G^u\subset G$ is the formal neighborhood of the unipotent elements.
 
Now our results from \cite{conns}, as recalled in 
Section~\ref{section: background}, immediately provide
    a canonical equivalence of dg categories
$$
\Coh_{[X_e/G]}(\La^\eta_e/G)^{\BS}_{loc} \simeq 
\oplus_{\iota\in I} \D_\coh(K_ {\iota}\backslash G/B).
$$
where we set $X_e/G =\coprod_{\iota\in I} K_ {\iota}\backslash G/B$.
Observe that the dg dervied category  
$\oplus_{\iota\in I} \D_\coh(K_ {\iota}\backslash G/B)$
forms the categorical Langlands parameters for trivial infinitesimal character.

Thus we arrive at the following fundamental interpretation of
categorical Langlands parameters.

\begin{corollary}\label{trivmonresult}
The localized $\BS$-invariants  in the dg category $\Coh_{[X_e/G]}(\La^\eta_e/G)$ are canonically equivalent
to the categorical Langlands parameters for trivial infinitesimal character.
\end{corollary}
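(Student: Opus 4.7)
The plan is to simply combine the geometric identification already established in Corollary~\ref{final result} with the general equivalence between quasicoherent sheaves on unipotent loop spaces and $\D$-modules recalled from~\cite{conns} in Section~\ref{section: background}.

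First, I would invoke Corollary~\ref{final result} at $\alpha = e$, which yields the decomposition
$$
\La_e/G \simeq \coprod_{\iota \in I} \cL^u(K_\iota \backslash G / B).
$$
Since this identification is as unipotent loop spaces, it transports the canonical $\BS$-action on each unipotent loop space (coming from its loop space structure, as described in Section~\ref{section: background}) to a $\BS$-action on $\La_e/G$. Applying $\qcoh(-)$, taking $\BS$-invariants with bounded above weight, and localizing by inverting $u$ commutes with the disjoint union, so we obtain
$$
\qcoh(\La_e/G)^{\BS_-}_{loc} \simeq \bigoplus_{\iota \in I} \qcoh(\cL^u(K_\iota \backslash G/B))^{\BS_-}_{loc}.
$$

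Next, I would apply the corollary from~\cite{conns} recalled at the end of Section~\ref{section: background}, namely
$$
\qcoh(\cL^u X)^{\BS_-}_{loc} \simeq \cD(X),
$$
to each smooth geometric stack $X = K_\iota \backslash G/B$. This immediately produces the equivalence
$$
\qcoh(\La_e/G)^{\BS_-}_{loc} \simeq \bigoplus_{\iota \in I} \cD(K_\iota \backslash G/B).
$$
Finally, I would identify the right-hand side with the categorical Langlands parameter category for trivial infinitesimal character by appealing to the review in the introduction: for $\lambda = 0$, we have $\alpha = \exp(\lambda) = e$, the centralizer is $G(\alpha) = G$, the twisted conjugacy classes $\Sigma(\eta, e)$ are in bijection with the set $I$ of involutions produced in Theorem~\ref{fixed points on BG}, and the symmetric subgroups $K_{e,\sigma}$ coincide with the $K_\iota$.

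There is essentially no obstacle here beyond bookkeeping, since all the real work was carried out in Theorem~\ref{main result} and in~\cite{conns}; the only point requiring care is checking that the $\BS$-action used in the~\cite{conns} equivalence on each component $\cL^u(K_\iota \backslash G/B)$ is the restriction, under the equivalence of Corollary~\ref{final result}, of the $\BS$-action on $\La_e/G$ coming from its unipotent loop space structure. This is automatic from the fact that Corollary~\ref{final result} is derived via Theorem~\ref{main result} as an identification of unipotent loop spaces, and the $\BS$-action is intrinsic to that structure.
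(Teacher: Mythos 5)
Your proposal is correct and follows exactly the same route as the paper: invoke Corollary~\ref{final result} at $\alpha = e$ to identify $\La_e/G$ with a disjoint union of unipotent loop spaces $\cL^u(K_\iota\backslash G/B)$, then apply the corollary from~\cite{conns} componentwise to obtain $\bigoplus_{\iota\in I}\D(K_\iota\backslash G/B)$, and finally observe this is the Langlands parameter category at trivial infinitesimal character. The closing observation about compatibility of the $\BS$-actions is slightly more explicit than what the paper writes, but it is the correct and implicit justification.
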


For general monodromy $\alpha\in H$,
again by Corollary~\ref{final result},
we have a canonical identification
  $$
\La^\eta_\alpha/G 
\simeq \coprod_{\iota\in I} \cL^u(K(\alpha)_{\iota}\backslash G(\alpha)/B(\alpha)).
$$
Thus we can transport the canonical $\BS$-action on the unipotent loop space of the right hand side to the monodromic Langlands parameter variety of the left hand side.  Again our results from \cite{conns} immediately provide
    a canonical equivalence of $\oo$-categories
$$
\Coh_{[X_\alpha/G]}(\La^\eta_\alpha/G )^{\BS}_{loc}
\simeq \oplus_{\iota\in I} \D_\coh(K(\alpha)_{\iota}\backslash G(\alpha)/B(\alpha))
$$
where  we set $X_\alpha/G =\coprod_{\iota\in I} K(\alpha)_{\iota}\backslash G(\alpha)/B(\alpha)$.
Thus we arrive at the following interpretation of Langlands parameters for arbitrary regular infinitesimal character.

\begin{corollary}\label{genmonresult}
  For any regular $\alpha\in H$,
  the localized $\BS$-invariants in the dg category
  $
\Coh_{[X_\alpha/G]}(\La^\eta_{\alpha}/G)
$
are canonically equivalent to the categorical Langlands parameters at infinitesimal character $W\cdot\alpha$.
\end{corollary}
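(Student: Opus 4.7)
The plan is to chain together the geometric identification from Corollary \ref{final result} with the loop-space/$\cD$-module correspondence recalled at the end of Section \ref{section: background}, and then match the resulting dg category with the Langlands parameter category of \cite{ABV} and \cite{Soergel}. The geometric half is essentially a direct transport of structure; the substantive content lies in identifying the output with the categorical parameters at infinitesimal character $W\cdot\alpha$.

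First I would invoke Corollary \ref{final result} to identify
$$\La_\alpha/G \simeq \coprod_{\iota\in I} \cL^u(K(\alpha)_\iota \backslash G(\alpha)/B(\alpha)).$$
Since the equivalence is realized as one of unipotent loop spaces, it is automatically equivariant for the canonical $\BS$-action (whose $S^1$-part is loop rotation and whose $\Gm$-part comes from the formality of $\cO(S^1)$, as reviewed in Section \ref{section: background}). Passing to $\qcoh$ converts the coproduct into a direct sum, and both $(-)^{\BS_-}$ and the $\hbar$-localization $(-)_{loc}$ commute with direct sums, so
$$\qcoh(\La_\alpha/G)^{\BS_-}_{loc} \simeq \bigoplus_{\iota\in I} \qcoh\bigl(\cL^u(K(\alpha)_\iota \backslash G(\alpha)/B(\alpha))\bigr)^{\BS_-}_{loc}.$$

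Second, I would apply the final corollary from \cite{conns} (recalled at the end of Section \ref{section: background}), which for any smooth geometric stack $X$ yields $\qcoh(\cL^u X)^{\BS_-}_{loc} \simeq \cD(X)$, to each summand $X_\iota = K(\alpha)_\iota \backslash G(\alpha)/B(\alpha)$. Each such double quotient is smooth and geometric because $G(\alpha)$ is reductive, $B(\alpha)$ is a Borel, and $K(\alpha)_\iota \subset G(\alpha)$ is a symmetric (hence reductive) subgroup acting on $G(\alpha)/B(\alpha)$ with finitely many orbits. Combining with the previous display gives
$$\qcoh(\La_\alpha/G)^{\BS_-}_{loc} \simeq \bigoplus_{\iota\in I} \cD(K(\alpha)_\iota \backslash G(\alpha)/B(\alpha)).$$

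Finally, and this is where the regularity hypothesis on $\alpha$ enters, I would match the right hand side with the categorical Langlands parameter category at infinitesimal character $W\cdot\alpha$, which in the notation of the introduction is $\bigoplus_{\sigma\in \Sigma(\eta,\alpha)} \cD(K_{\alpha,\sigma} \backslash G_\alpha/B_\alpha)$. This amounts to a bijection between the two indexing sets $I$ and $\Sigma(\eta,\alpha)$ compatible with the assignments $\iota \mapsto (K(\alpha)_\iota,G(\alpha)/B(\alpha))$ and $\sigma \mapsto (K_{\alpha,\sigma},G_\alpha/B_\alpha)$. Unwinding the proof of Theorem \ref{fixed points on BG} in the present context, elements of $I$ correspond to $G(\alpha)$-conjugacy classes of $h \in G(\alpha)$ satisfying an equation of the form $h\,\eta_\alpha(h) = 1$, where $\eta_\alpha$ is the involution on $G(\alpha)$ induced by conjugating $\eta$ with a lift of $\alpha$; such classes are in natural bijection with $\Sigma(\eta,\alpha)$, and the induced symmetric subgroups match. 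Regularity of $\alpha$ is used precisely to guarantee $G(\alpha) = G_\alpha$ and that each fixed-point involution stays in the same inner class, so no parameters are conflated.

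The step I expect to be the main obstacle is the last one: the combinatorial match of $I$ with $\Sigma(\eta,\alpha)$ and the verification that our description of $\bigoplus_\iota \cD(K(\alpha)_\iota \backslash G(\alpha)/B(\alpha))$ coincides, as a dg category, with the parameter category of \cite{ABV} and \cite{Soergel}. This requires careful tracking of how $\eta$ is twisted along the identification of $\St_{\alpha,\eta(\alpha)}/G$ with $\cL^u(B(\alpha) \backslash G(\alpha)/B(\alpha))$ from Theorem \ref{monodromic steinberg variety as loop space}, including the role of the Weyl element $w$ taking $\alpha$ to $\eta(\alpha)$. Once the bookkeeping is done, the compatibility with Hecke symmetries also follows from the monoidal refinement of Theorem \ref{first main result} noted in the text.
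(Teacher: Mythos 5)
Your proposal is correct and follows essentially the same route as the paper: invoke Corollary~\ref{final result} to identify $\La_\alpha/G$ with $\coprod_{\iota\in I} \cL^u(K(\alpha)_\iota \backslash G(\alpha)/B(\alpha))$, transport the $\BS$-action, apply the corollary from~\cite{conns} to each smooth geometric summand, and then identify the resulting direct sum of $\D$-module categories with the parameter category of~\cite{ABV} and~\cite{Soergel}. The paper is in fact terser than you are: it states the displayed equivalence $\qcoh(\La_\alpha/G)^{\BS_-}_{loc} \simeq \bigoplus_{\iota\in I}\D(K(\alpha)_\iota\backslash G(\alpha)/B(\alpha))$ and treats the corollary as an immediate restatement, with the matching of $I$ against $\Sigma(\eta,\alpha)$ and the comparison with the ABV/Soergel definitions deferred to a ``one can then check'' remark earlier in the introduction. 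Your unwinding of that combinatorial match via the proof of Theorem~\ref{fixed points on BG} is the right way to fill in that deferred check. One small slip: you write that regularity ``guarantees $G(\alpha)=G_\alpha$,'' but in the paper these are the same object by definition (the introduction uses $G_\alpha$, the body uses $G(\alpha)$, both meaning the centralizer of $\alpha$); regularity is instead what makes the comparison with the ABV/Soergel setup at infinitesimal character $W\cdot\alpha$ go through without further modification, and is imposed for that reason (note that Theorem~\ref{categorical Langlands precise} in the introduction is stated without regularity, precisely because it refers only to the self-contained right-hand side and not to ABV/Soergel's categories).
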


The above result gives a description of $\D$-modules on the
geometric parameter spaces 
$$\coprod_{\iota\in I} K(\alpha)_\iota\backslash G(\alpha)/B(\alpha)
$$
as part of a nice family with respect to the paramater $\alpha$.
Namely, they can be recovered from quasicoherent sheaves on the unipotent loop spaces of
 the
geometric parameter spaces. And the unipotent loop spaces in turn
fit into the nice family formed by the Langlands
parameter variety $\La^\eta$. It is crucial
that we sought such a uniform picture in the realm of loop spaces
rather than cotangent bundles.

\begin{remark}
By 
Lemma~\ref{monod vs fixed}, we have a canonical identification 
$$
\La^\eta_\alpha/G \simeq (\St_{\alpha, \eta(\alpha)}/G)^{\Gamma_{tw}}.
$$
equipping $\cL_\alpha/G$ with a canonical $S^1$-action.
As we discussed in Remark~\ref{comparison of actions} in the setting of Steinberg varieties, one should proceed carefully with respect to this symmetry.
In general, this $S^1$-action on 
 $
 \La^\eta_{\alpha}/G
 $ and the $B\Ga$-action induced from its realization as a unipotent loop space do not exactly coincide.
Rather the action map of one can be obtained from that of the other by the universal automorphism
given by the central loop $\alpha$.
\end{remark}



\end{document}